\documentclass[11pt,reqno]{amsart}
\usepackage[margin=1in]{geometry}
\usepackage{amsmath,amssymb,amsthm}
\usepackage{enumitem,comment}
\usepackage{hyperref}
\usepackage{graphicx}
\usepackage{xcolor}
\usepackage{seqsplit}
\usepackage[T1]{fontenc} 
\usepackage[utf8]{inputenc}

\newtheorem{theorem}{Theorem}[section]

\newtheorem{lemma}[theorem]{Lemma}
\newtheorem{corollary}[theorem]{Corollary}

\theoremstyle{remark}
\newtheorem{remark}[theorem]{Remark}
\newtheorem{example}[theorem]{Example}

\DeclareMathOperator{\Res}{Res}

\title[Verlinde sums, graph theory and Binet formulas]{Evaluating $\text{\rm SU}(3)$ Verlinde sums using spectral graph theory}
\author{Jay Jorgenson}\thanks{The first named author acknowledges grant support from PSC-CUNY Awards 67415-00 55 and 68462-00 56, which are jointly funded by the Professional Staff Congress and The City University of New York.}
\author{Anders Karlsson}\thanks{The second-named author was supported by the Swiss NSF Grants 200020-200400 and 200021-212864, and by the Swedish Research Council Grant 104651320.}
\author{Lejla Smajlovi\'c}
\date{\today}

\begin{document}
\begin{abstract}
We realize the $\mathrm{SU}(3)$ Verlinde sums $V_n(m)$, up to an
explicit factor, as the values at $n$ of the spectral zeta function
of a higher-order Laplace operator on the
triangular discrete torus on $m^2$ vertices. 
For fixed $m$, we express their generating function in terms of the
logarithmic derivative of an associated even spectral polynomial
and express this polynomial as an explicit iterated resultant.
Exploiting permutation symmetry, we prove that this polynomial is
a cube over $\mathbb{Q}$, apart from an explicit quadratic factor
when $3\mid m$.
This factorization yields shorter linear recurrences satisfied by $V_n(m)$ with constant
coefficients.
We also derive Binet-type formulas expressing $V_n(m)$ as finite
linear combinations of powers of rescaled inverse squares of the
roots of the spectral polynomial.
These results provide an efficient algorithm for computing
these Verlinde sums.
Several fully developed examples demonstrating computational efficiency of the method are given, including expressions in terms of Fibonacci and Lucas numbers.
\end{abstract}

\maketitle

\section{Introduction}

Verlinde's formula, originally proposed from conformal field theory, gives the dimension of a certain finite-dimensional vector space associated with a compact Lie group, the genus $g$  of a  Riemann surface, and a positive integral level $k$ \cite{Verlinde}; see also the mathematical introduction to conformal field theory in \cite{Sc08}. Its proofs and interpretation in terms of conformal blocks and generalized theta functions was developed in \cite{BL,TUY}. Different proofs and algebro-geometric formulations were obtained in \cite{Faltings,KNR94,Beauville,Kumar}. Related approaches to Verlinde's formula include stable bundles and stable pairs \cite{Bott,Thaddeus}, the derivation of the $G/G$ model from Chern--Simons theory \cite{BT93}, and a loop-group fixed point formula on the moduli space of flat bundles \cite{AMW01}.

In this paper, as in \cite{Zagier}, we suppress the geometric, algebraic, topological and physical background and study directly the finite trigonometric expressions, which we call \emph{Verlinde sums}.

Early combinatorial and residue theoretic work on the Verlinde formula includes Szenes's verification of the $\mathrm{SU}(2)$ case and his subsequent combinatorial treatment \cite{Sz91,Sz93}. 
Residue and rational trigonometric sum methods were developed further in \cite{Sz98,Sz03}; related residue formulas for vector partitions and Euler--Maclaurin sums appear in \cite{SV03,BrVe}, while wall-crossing and partition-function techniques relevant to this framework appear in \cite{BoVe}. An interested reader may also consult \cite{Ve03} for an exposition of residue formulas for Verlinde sums.
More recently, Verlinde sums were treated as rational trigonometric sums in \cite{LM22}, with their relation to quantization considered in \cite{LM-QR}. Spectral and discrete-analytic methods for related trigonometric sums appear in \cite{Dowker92,CHJSV23a,JKS-cosecant, KM26}.

Dowker remarked in \cite[p. 2642]{Dowker92} that the $\mathrm{SU}(2)$ Verlinde sums are special values of the spectral zeta function of discrete circles and asked whether related sums admit a similar spectral interpretation. We answer this question for $\mathrm{SU}(3)$ by realizing the corresponding Verlinde sums as special values of the spectral zeta function of a higher-order Laplace operator on a triangular discrete torus. Whereas earlier residue and generating-function approaches \cite{Sz93, Zagier} study these sums principally as functions of the level for fixed genus, our spectral-polynomial approach addresses their genus dependence for fixed level.

This spectral realization leads to an explicit and effective algorithm for computing the $\mathrm{SU}(3)$ Verlinde sums for arbitrary fixed level and genus parameter; see Section \ref{sec: alg}. More precisely, we construct the associated spectral polynomial by means of an explicit iterated resultant and use its additional symmetries to reduce the computational complexity. The logarithmic derivative of the reduced polynomial yields a rational generating function, substantially shorter constant coefficient recurrences, and Binet-type formulas expressing the Verlinde sums as finite sums of powers of algebraic numbers. Consequently, the theoretical framework gives a symbolic computational procedure based on polynomial arithmetic, resultants, factorization, and recurrence evaluation. The fully worked examples demonstrate the effectiveness of the method even when the unreduced spectral polynomial has high degree and the resulting Verlinde sums are extremely large.

For example, we prove that the Verlinde sum $V_{g-1}(k+3)$ on  $\text{\rm SU}(3)$ associated to the genus $g>1$ and level $k=2$ can be explicitly expressed in terms of Fibonacci numbers as
$$
V_{g-1}(5)=\left\{
                                                  \begin{array}{ll}
                                                    2\cdot3^{g}5^{\frac{g}{2}}F_{g-1}, &\text{if  } g \text{  is even} \\
                                                    2\cdot3^{g}5^{\frac{g-1}{2}}(F_{g-2} + F_{g}), &\text{if  } g \text{  is odd}
                                                  \end{array}
                                                \right.,
$$
see equation \eqref{vn of 5} in the Appendix. Another example, from the expression \eqref{vn 10} for the Verlinde sum $V_{g-1}(10)$, $n=g-1\geq 1$:
\begin{equation}
V_{n}(10)=
\begin{cases}
\displaystyle
6\cdot 5^{n/2}
\left[(2\cdot 15^n+12^n+2\cdot 60^n)L_n+60^nL_{5n}\right],
& n\ \text{even},\\[4mm]
\displaystyle
6\cdot 5^{(n+1)/2}
\left[(2\cdot 15^n+12^n+2\cdot 60^n)F_n+60^nF_{5n}\right],
& n\ \text{odd},
\end{cases}
\end{equation}
where $L_n$ denotes the $n$th Lucas number.

\subsection{Verlinde sums for \texorpdfstring{$\text{\rm SU}(2)$}{SU(2)}}

The study of Verlinde sums for $\text{\rm SU}(2)$ amounts to understanding the quantities
\begin{equation}
\label{eq:Cm}
C_m(n) := \sum_{j=1}^{m-1} \frac{1}{(\sin(\pi j/m))^{2n}}
\,\,\,\,\,
\text{\rm for $n,m\in\mathbb Z_{>1}$.}
\end{equation}
In \cite{Zagier} there are nine equivalent characterizations of these
numbers. The quantities \eqref{eq:Cm} were computed explicitly, in both untwisted and
twisted forms, by Dowker \cite{Dowker92}, who evaluated the corresponding sums
of powers of cosecants in closed form in connection with Verlinde's formula for
the $\text{\rm SU}(2)$ Wess--Zumino--Witten model; these cosecant sums are
precisely the object recast spectrally in \cite{JKS-cosecant}. The point of view of \cite{JKS-cosecant} adds to the content
of \cite{Dowker92, Zagier} by realizing \eqref{eq:Cm} using the spectrum of the discrete
Laplacian acting on a certain function space on a finite graph.
The graph in question is the discrete circle $\mathbb Z/m\mathbb Z$, and the function space
is the set of functions on $\mathbb Z/m\mathbb Z$ (an additional twist is admissible).
The setting is as follows.

For $r\in\{-(m-1),\dots,m-1\}$, let
\begin{equation}\label{eq:csc-sums}
C_{m,r}(n) := \frac1m\sum_{j=1}^{m-1}\csc^{2n}\!\Big(\frac{j}{m}\pi\Big)e^{2\pi i r j/m}
\end{equation}

As proved in \cite{JKS-cosecant}, the corresponding generating functions
$$
f_{m,r}(s) = \sum_{n\ge0} C_{m,r}(n+1)\,s^n
$$

are for sufficiently small $s$ convergent series and are equal to rational functions which can be expressed explicitly
in terms of Chebyshev polynomials $T_m,U_m$ of the first and second kind.  Specifically,
it is shown in \cite{JKS-cosecant} that
\begin{equation}
\label{eq:fmr}
f_{m,r}(s) = 2\,\frac{U_{m-\ell-1}(1-2s)+U_{\ell-1}(1-2s)}{T_m(1-2s)-1} + \frac{1}{ms},
\end{equation}
where $\ell\in\{0,\dots,m-1\}$ satisfies $\ell\equiv r\pmod m$ and
$U_{-1}\equiv0$. From 
\eqref{eq:fmr} one obtains both
closed forms and a one-step recursion in $m$ for \eqref{eq:Cm}.

\subsection{Verlinde sums for \texorpdfstring{$\text{\rm SU}(3)$}{SU(3)} and spectral graph theory}

Following \cite{Zagier}, we take as the definition of the $\text{\rm SU}(3)$ Verlinde sum $V_n(m)$, for
positive integers $n$ and $m$, to be the series
\begin{equation}
\label{eq:Vn-defn}
V_n(m) = 3^nm^{2n}\!\!\!\sum_{\substack{(j,\ell,k)\in(\mathbb Z/m\mathbb Z \setminus \{0\})^3\\ j+\ell+k\equiv0\bmod m}}\!\!\!\Big(8\sin\frac{\pi j}{m}\sin\frac{\pi\ell}{m}\sin\frac{\pi k}{m}\Big)^{-2n}.
\end{equation}
When comparing our notation to \cite{Zagier},
our $n$ (resp. $m$) corresponds to $g-1$ (resp. $k+3$).  In a slight abuse of
terminology, one can  refer to $n$ as a genus and $m$ as a level, noting here that, in actuality,
$n=g-1$ and $m=k+3$. Therefore, unless otherwise stated, throughout this paper we assume $m\geq 3$.

Consider the standard lattice graph $\mathbb{Z}^{3}$ and its quotient torus $\text{\rm DT}(m) = (\mathbb{Z}/m\mathbb{Z})^{3}$, which is a
 three-dimensional discrete torus with edges in the coordinate directions.  Take a further quotient by the free diagonal action
$$
 (x_1,x_2,x_3)\longmapsto(x_1+1,x_2+1,x_3+1).
$$
The map
$$
 [x_1,x_2,x_3]\longmapsto(x_1-x_3,x_2-x_3)
$$
identifies the quotient with
\[
 G_m=(\mathbb Z/m\mathbb Z)^2.
\]
Writing $x=(x_1,x_2)$, its six neighbors are
\[
 x\pm e_1,\qquad x\pm e_2,\qquad x\pm(e_1+e_2).
\]
Thus $G_m$ is what could be called an example of a  \emph{circulant torus}, or more specifically a six-regular triangular discrete torus on $m^2$ vertices.  The quotient map $\operatorname{DT}(m)\to G_m$ is an $m$-to-one regular covering.
See Figure \ref{fig:cayley-299}.

For $d\in G_m$, let $T_d$ denote the operator
$$
 (T_df)(x):=f(x+d)
$$
and define a directional Laplacian as follows:
$$
\Delta_d =(T_d-I)^*(T_d -I)=2I-T_d-T_{-d}.
$$
The corresponding higher-order Laplace operator is
$$
\Delta_m = \frac 14 \Delta_{e_1}\Delta_{e_2}\Delta_{e_1+e_2}.
$$
\begin{figure}
  \centering
 \includegraphics[width=0.5\textwidth]{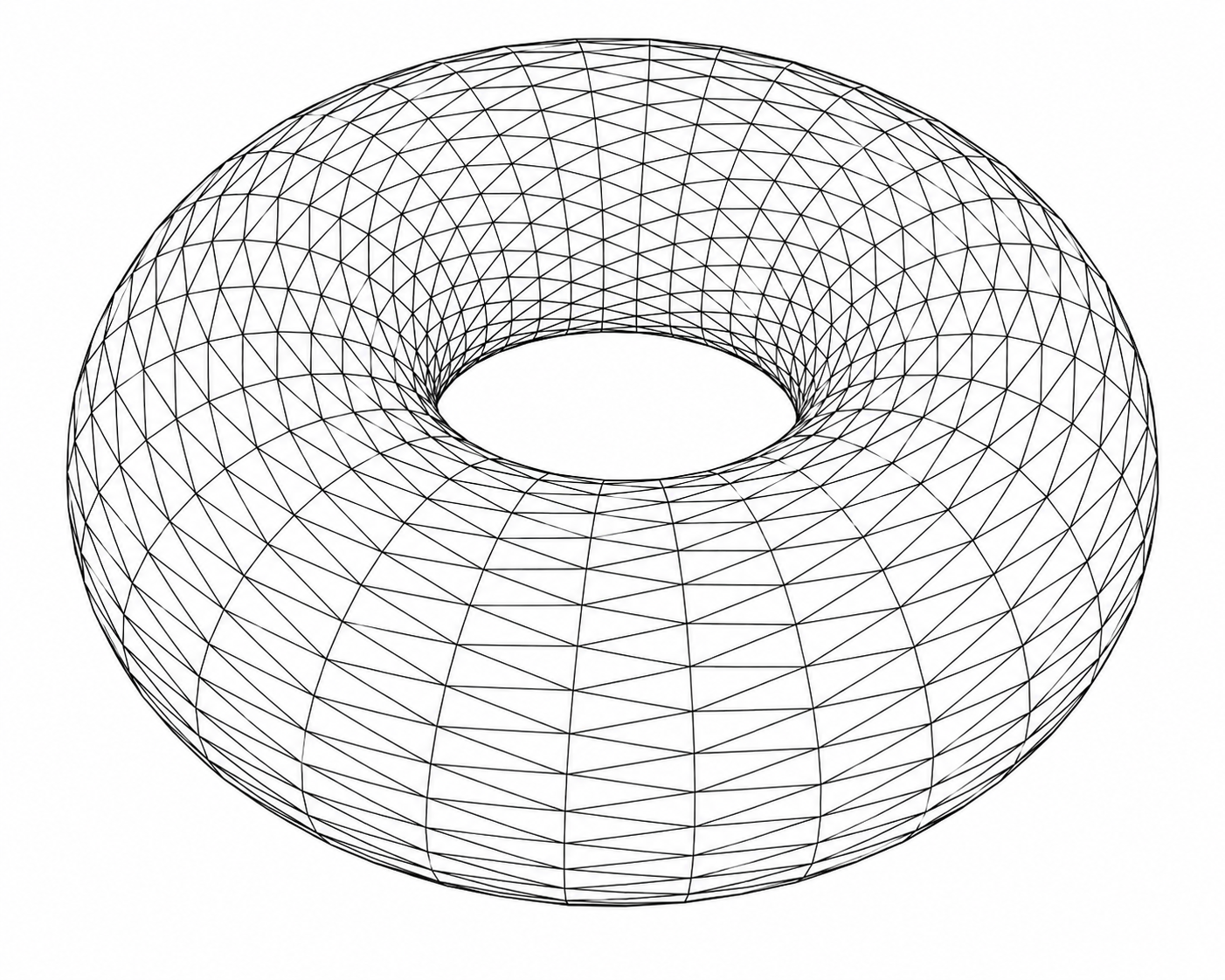}
  \caption{The two dimensional triangular torus $G_m$}.
\label{fig:cayley-299}
\end{figure}
As will be explained below, for $j=(a,b)$, the eigenvalues of $\Delta_m$ are
$$
\lambda_m(j)= 16 \sin^2(\pi a/m)\sin^2(\pi b/m)\sin^2(\pi (a+b)/m).
$$
This gives a spectral zeta interpretation of the Verlinde sums
$$
V_n(m)=\big(3m^2 /4 \big)^n \zeta_{\Delta_m}(n).
$$
Here the spectral zeta function is a sum over the non-zero eigenvalues, more precisely
$$
\zeta_{\Delta_m}(s)= \sum_{j\in G_m \setminus Z} \lambda_m(j)^{-s}
$$
where $Z= \{ j=(a,b):a=0, b=0, \mathrm{ or }\,\, a+b \equiv 0 \}.$ \footnote{The complex function $z\mapsto z^{-s}$ for $z\in\mathbb{C}\setminus\{0\}$ is defined using the principal branch of the logarithm.}


From this construction, we state our first theorem which is the following.

\begin{theorem}
\label{thm:maina}
Let $G_{m}$ be the $2$-dimensional discrete triangular torus on $m^2$ vertices and $\Delta_m$ defined above. Set $c_m:=3m^2/4$. Then
$$
V_n(m)=( 3m^2/4)^n \zeta_{\Delta_m}(n).
$$
Moreover, let $H(w)=\prod_{j\in G_m \setminus Z}(\lambda_m(j)-w)$ be the reduced spectral polynomial of $\Delta_m$, and the rational function
$$
F(w)=-\frac{1}{m^2}\frac{H'(w)}{H(w)}
$$
Then
$$
V_n(m) = 3^n\Big(\frac m2\Big)^{2n}\cdot\frac{m^2}{(n-1)!}\,\partial_w^{\,n-1}F(w)\Big|_{w=0}
$$
and
\begin{align}\label{eq:generating_function}
\sum_{n\ge1} V_n(m)\,\tau^n &= c_m\tau\cdot m^2F(c_m\tau).
\end{align}
as an identity of formal power series in $\tau$ near $\tau=0$. Indeed, the series in
\eqref{eq:generating_function} converges absolutely and uniformly on every closed disc
$\vert \tau \vert\leq R< \frac{1}{c_m}\min_{j\notin Z}\lambda_m(j) $.

\end{theorem}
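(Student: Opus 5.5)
The proof will have three steps: diagonalize $\Delta_m$ by characters of $G_m$, rewrite the Verlinde sum in terms of those eigenvalues, and then expand the logarithmic derivative of $H$ as a geometric series.

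\emph{Spectrum of $\Delta_m$.} For $j=(a,b)\in G_m$ take the character $\chi_j(x)=e^{2\pi i (ax_1+bx_2)/m}$. Each translation acts on it by a scalar, $T_d\chi_j=e^{2\pi i\langle j,d\rangle/m}\chi_j$. Hence
\[
\Delta_d\chi_j=\bigl(2-2\cos(2\pi\langle j,d\rangle/m)\bigr)\chi_j=4\sin^2(\pi\langle j,d\rangle/m)\,\chi_j .
\]
Taking $d=e_1,e_2,e_1+e_2$ gives the arguments $\pi a/m$, $\pi b/m$, $\pi(a+b)/m$. The product of the three factors is $64\sin^2\sin^2\sin^2$, and dividing by $4$ yields $\lambda_m(j)$. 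The $m^2$ characters form a basis of functions on $G_m$, so this is the full spectrum with multiplicity. Moreover $\lambda_m(j)=0$ exactly when $j\in Z$.

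\emph{Spectral zeta identity.} The triples $(j,\ell,k)$ in \eqref{eq:Vn-defn} with all entries nonzero and $j+\ell+k\equiv 0$ correspond bijectively to $(a,b)\in G_m\setminus Z$ via $k\equiv -(a+b)$. Since $\sin^2$ is even, $\sin^2(\pi k/m)=\sin^2(\pi(a+b)/m)$. It follows that $(8\sin\sin\sin)^{-2n}=(64\cdot\tfrac{1}{16}\lambda_m(j))^{-n}=4^{-n}\lambda_m(j)^{-n}$. Multiplying by $3^nm^{2n}$ gives $V_n(m)=(3m^2/4)^n\zeta_{\Delta_m}(n)$.

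\emph{Logarithmic derivative and generating function.} Logarithmic differentiation of $H$ gives
\[
-\frac{H'(w)}{H(w)}=\sum_{j\notin Z}\frac{1}{\lambda_m(j)-w}.
\]
For $|w|<\min_{j\notin Z}\lambda_m(j)$ each term expands as $\sum_{n\ge1}\lambda_m(j)^{-n}w^{n-1}$. Summing over $j$ gives
\[
m^2F(w)=\sum_{n\ge1}\zeta_{\Delta_m}(n)\,w^{n-1}.
\]
The Taylor coefficient formula then gives $\zeta_{\Delta_m}(n)=\frac{m^2}{(n-1)!}\partial_w^{n-1}F(0)$. Multiplying by $c_m^n=3^n(m/2)^{2n}$ yields the derivative formula for $V_n(m)$.

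Substituting $w=c_m\tau$ and multiplying by $c_m\tau$ gives $\sum_{n\ge 1} c_m^n\zeta_{\Delta_m}(n)\tau^n=\sum_{n\ge1}V_n(m)\tau^n$, which is \eqref{eq:generating_function}. For convergence, on $|\tau|\le R$ with $c_mR<\min\lambda$, set $q=c_mR/\min\lambda<1$. Then the terms satisfy $|V_n(m)\tau^n|\le (m^2-|Z|)\,q^n$, and the Weierstrass M-test gives absolute and uniform convergence.

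\emph{Main difficulty.} No step is a genuine obstacle. The only point requiring care is bookkeeping: the normalization constant $1/4$ in $\Delta_m$, the factor $64=8^2$, and the correct correspondence between the triple-sum index set and $G_m\setminus Z$, in particular the sign of $k$, which is harmless because $\sin^2$ is even.
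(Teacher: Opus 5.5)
Your proposal is correct and follows the paper's proof essentially step for step. The steps are: diagonalization of $\Delta_m$ by the characters $\chi_j$, the bijection $(a,b)\mapsto(a,b,-a-b)$ giving $(8\sin\frac{\pi a}{m}\sin\frac{\pi b}{m}\sin\frac{\pi k}{m})^2=4\lambda_m(j)$, and termwise geometric expansion of $-H'/H=\sum_{j\notin Z}(\lambda_m(j)-w)^{-1}$. The only cosmetic difference is how you get uniform convergence: you use the M-test bound $|V_n(m)\tau^n|\le (m-1)(m-2)q^n$, whereas the paper uses uniform convergence of each of the finitely many geometric series; the formal power series identity then follows from the analytic one by uniqueness of Taylor coefficients.
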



One can view the last part of Theorem \ref{thm:maina} as the generalization to $\text{\rm SU}(3)$ of
\eqref{eq:fmr} from \cite{JKS-cosecant}.

\subsection{Verlinde sums for \texorpdfstring{$\text{\rm SU}(3)$}{SU(3)} and resultants}

In \cite{JKS-cosecant} it was shown that the resolvent kernel $G(x,y;s)$ for $\mathbb{Z}/m\mathbb{Z}$ when
$x=y$ can be given in terms of Chebyshev polynomials; see 
As a consequence, one was able to obtain linear recursion formulas for the series \eqref{eq:Cm} and
\eqref{eq:csc-sums}.  The following theorem gives the analogous statement for $\textrm{\rm SU}(3)$.
In this case, we replace the Chebyshev polynomials with the resultant $\Res_x(f,g)$ which is constructed from
elementary and explicit polynomials which depend solely on $m$.

For the generating function statement below, set $V_0(m):=(m-1)(m-2)$.

\begin{theorem}
\label{thm:mainb}
For any integer $m \geq 3$ and variables $u, v$ and $w$, let
\begin{equation}\label{eq:resultant_input}
Q(u):=u^m-i^m
\,\,\,\,\,
\text{\rm and}
\,\,\,\,\,
W(u,v,w) := iu^2v^2+u^2v+uv^2-2wuv+u+v-i,
\end{equation}
where $i=\sqrt{-1}$.   Let
\begin{equation}\label{eq:resultant_first}
D(w) = \frac{1}{2^{m^2}w^{3m-2}}\Res_u\Big(Q(u),\ \Res_v\big(Q(v),\,W(u,v,w)\big)\Big).
\end{equation}
Then $D(w)$ is an even, monic polynomial in $w$ of degree $(m-1)(m-2)$.  Furthermore,
for sufficiently small $\tau$ and fixed $m$, the
generating series \eqref{eq:generating_function} for Verlinde sums satisfies the relation that
\begin{equation}
\label{eq:generarating_series}
\sum_{n\ge0} V_n(m)\,\tau^n = (m-1)(m-2) - \sqrt{c_m\tau}\,\frac{D'(\sqrt{c_m\tau})}{D(\sqrt{c_m\tau})}
\,\,\,\,\,
\text{\rm where}
\,\,\,\,\,
c_m:=\frac{3m^2}{4}.
\end{equation}
\end{theorem}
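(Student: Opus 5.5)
The plan is to evaluate the iterated resultant explicitly as a product over the roots of $Q$. Then $D(w)$ becomes a product of linear factors whose roots are the signed square roots of the eigenvalues $\lambda_m(j)$. The generating function identity then follows from Theorem \ref{thm:maina} by a logarithmic-derivative computation.

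\textbf{Step 1: factor the resultant.} Since $Q$ is monic of degree $m$, we have $\Res_v(Q(v),P(v))=\prod_{Q(\beta)=0}P(\beta)$ for any polynomial $P$ in $v$, and similarly in $u$. I will fix this convention at the start; it is the one in which the leading-coefficient factor of $Q$ is trivial, and I will check that it produces no extraneous sign. The roots of $u^m=i^m$ are $u=ix$ with $x=\zeta^a$, where $\zeta=e^{2\pi i/m}$ and $a\in\mathbb Z/m\mathbb Z$. Hence
\[
\Res_u\Big(Q(u),\Res_v\big(Q(v),W\big)\Big)=\prod_{a,b\in\mathbb Z/m\mathbb Z}W(i\zeta^a,i\zeta^b,w).
\]
Substituting $u=ix$, $v=iy$ and expanding gives
\[
W(ix,iy,w)=i(x^2y^2-x^2y-xy^2+x+y-1)+2wxy=i(x-1)(y-1)(xy-1)+2wxy.
\]
Now put $x=e^{i\theta}$, $y=e^{i\varphi}$ and use $e^{i\theta}-1=2ie^{i\theta/2}\sin(\theta/2)$. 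The product of the three such factors is $(2i)^3xy\,s_1s_2s_3=-8i\,xy\,s_1s_2s_3$, where $s_1=\sin(\pi a/m)$, $s_2=\sin(\pi b/m)$, $s_3=\sin(\pi(a+b)/m)$. Therefore
\[
W(i\zeta^a,i\zeta^b,w)=2xy\,(w+\mu_{a,b}),\qquad \mu_{a,b}:=4s_1s_2s_3,
\]
where the signed sines are taken with representatives $0\le a,b<m$. The product of $xy$ over all $(a,b)$ is a power of $\prod_a\zeta^a$ raised to a multiple of $m$, hence equal to $1$. The pairs $(a,b)\in Z$ number $3m-2$, and each contributes exactly the factor $2w$. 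This accounts for the normalization $2^{m^2}w^{3m-2}$ and gives
\[
D(w)=\prod_{(a,b)\notin Z}(w+\mu_{a,b}).
\]
So $D$ is monic of degree $m^2-(3m-2)=(m-1)(m-2)$, and $\mu_{a,b}^2=\lambda_m(a,b)$.

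\textbf{Step 2: evenness.} The involution $(a,b)\mapsto(-a,-b)$ preserves the complement of $Z$ and sends $\mu_{a,b}\mapsto-\mu_{a,b}$, since each of the three sines changes sign. It has no fixed points off $Z$: a fixed point satisfies $2a\equiv2b\equiv0$, and the only candidate $a=b=m/2$ (for $m$ even) has $a+b\equiv0$, so it lies in $Z$. Pairing orbits yields
\[
D(w)=\prod_{\text{pairs}}(w^2-\lambda_m(j)),
\]
which is even in $w$. Equivalently $D(w)^2=H(w^2)$, with a sign that is irrelevant for logarithmic derivatives.

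\textbf{Step 3: generating function.} With $t=\sqrt{c_m\tau}$ we get
\[
\frac{tD'(t)}{D(t)}=\sum_{\text{pairs}}\frac{2t^2}{t^2-\lambda}=\sum_{j\notin Z}\frac{t^2}{t^2-\lambda_m(j)}=-\sum_{j\notin Z}\sum_{n\ge1}\Big(\frac{c_m\tau}{\lambda_m(j)}\Big)^n .
\]
This expansion is valid for $|c_m\tau|<\min_{j\notin Z}\lambda_m(j)$. Hence
\[
(m-1)(m-2)-\frac{tD'(t)}{D(t)}=(m-1)(m-2)+\sum_{n\ge1}c_m^n\zeta_{\Delta_m}(n)\tau^n,
\]
and the first part of Theorem \ref{thm:maina} identifies the coefficient of $\tau^n$ with $V_n(m)$. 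The constant term is $V_0(m)=(m-1)(m-2)=\#(G_m\setminus Z)$, as required.

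The only delicate point is the bookkeeping in Step 1. It requires the correct resultant sign convention, the phase factors $e^{i\theta/2}$ (which must combine to exactly $xy$), and the check that the leftover product of $2xy$ equals precisely $2^{m^2}$ with no stray root of unity. I will verify these directly, with a sanity check at $m=3$, where $D$ should be $w^2-\lambda$ for the single pair $(1,1),(2,2)$.
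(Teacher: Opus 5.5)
Your argument is correct, and its skeleton matches the paper's. You evaluate the iterated resultant by the product formula for monic $Q$ (Lemma~\ref{prop:resultant-basic}(ii), as in Theorem~\ref{thm:second-resultant}). You extract the constant $2^{m^2}$ and the factor $w^{3m-2}$ coming from the $3m-2$ degenerate indices. Evenness comes from $j\mapsto -j$, and the identity from expanding the logarithmic derivative.

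The genuine difference is how you evaluate $W$ at pairs of roots of $Q$. The paper uses $\bar u=1/u$ on the unit circle to write $W(u,v,w)=-2uv\bigl(w-(\operatorname{Re}u+\operatorname{Re}v+\operatorname{Re}z_3)\bigr)$ with $z_3=-i/(uv)$. So the roots of $D$ first appear as sums of sines $3\Lambda_m(j)$. The paper then needs the sum-to-product identity $3\Lambda_m(j)=(-1)^\ell 4\prod_k\sin(\pi j_k/m)$ for two things: to see that $\Lambda_m$ vanishes exactly on $A_0(m)$, and to relate $r_j^2$ to $\lambda_m(j)$. It then runs the expansion through the power sums $\sum_j r_j^{-k}$, discarding odd $k$ by the $\pm$ symmetry.

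Your factorization $W(ix,iy,w)=i(x-1)(y-1)(xy-1)+2wxy$ (the $\mathrm{SU}(3)$ Weyl denominator in disguise) produces the product of three sines directly. As a result, the vanishing locus $Z$, the identity $\mu_{a,b}^2=\lambda_m(a,b)$, and the appeal to the first part of Theorem~\ref{thm:maina} are immediate, with no trigonometric identity needed. The paper's formulation, in turn, explains how $W$ was designed: it encodes the constraint $z_{j_1}z_{j_2}z_{j_3}=-i$ on the real parts, continuing the Chebyshev construction of Section~\ref{sec:first-construction}.

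One small repair is needed in Step 2. With your representatives $0\le a,b<m$, the involution sends $(a,b)$ to $(m-a,m-b)$. This leaves $\sin(\pi a/m)$ and $\sin(\pi b/m)$ unchanged and flips only the third sine. The conclusion $\mu\mapsto-\mu$ is still right. But to say that all three sines change sign, you should first observe that $\mu_{a,b}$ depends only on $(a,b)$ modulo $m$: replacing $a$ by $a+m$ flips two of the sines. This also follows from your own formula, since $W(i\zeta^a,i\zeta^b,w)=2xy(w+\mu_{a,b})$ depends only on $\zeta^a,\zeta^b$. Finally, the sign in $D(w)^2=H(w^2)$ is $(-1)^{(m-1)(m-2)}=+1$.
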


The resultant in \eqref{eq:resultant_first} is sufficiently constructive that it is amenable for
explicit computations.  Throughout this article we will give numerous examples for various $m$.

\subsection{Verlinde sums for \texorpdfstring{$\text{\rm SU}(3)$}{SU(3)} and sums of powers}

If one were to multiply both sides of \eqref{eq:generarating_series} by $D$, one gets a recursive formula in $n$
of length $\textrm{\rm deg}(D)/2 = (m-1)(m-2)/2$ for the Verlinde sums $V_{n}(m)$ for fixed $m$. (The degree is divided by 2, because the generating variable $\tau$ is such that $w^2=c_m\tau$, while $\textrm{\rm deg}(D)$ signifies the degree of $D$ in variable $w$.) The recursive formulas are
obtained by equating coefficients in $\tau$ in the resulting product.  However, because
the right-hand-side of \eqref{eq:generarating_series} is the logarithmic derivative of $D$, the length can be
reduced whenever $D$ has multiple roots, meaning $D$ and $D'$ have common factors.
The following theorem shows that, in general, $D$ is highly divisible,
so then one can expect recursive formulas for $V_{n}(m)$ which are much shorter.

\begin{theorem}
\label{thm:cube_first}
For every $m\ge3$, there is a polynomial $E(w) \in \mathbb{Q}[w]$ such that
\[
D(w) = \begin{cases}E(w)^3, & 3\nmid m,\\ \big(w^2-\tfrac{27}{4}\big)E(w)^3, & 3\mid m.\end{cases}
\]
As such, $\deg E = (m-1)(m-2)/3$ if $3\nmid m$ and $\deg E = ((m-1)(m-2)-2)/3$ if $3\mid m$.
\end{theorem}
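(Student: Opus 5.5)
The plan is to read the cube structure off the explicit root set of $D$ together with an order-three symmetry of the triangular torus $G_m$, and then to use a Galois argument for rationality. I take as given the description of the roots of $D$ implicit in Theorem~\ref{thm:mainb}. Put $s(x):=\sin(\pi x/m)$ for $x\in\mathbb Z$ and
\[
\mu(a,b):=4\,s(a)\,s(b)\,s(a+b).
\]
This is well defined on $G_m$: replacing $a$ by $a+m$ flips the signs of both $s(a)$ and $s(a+b)$, and similarly for $b$. Moreover $\mu(-a,-b)=-\mu(a,b)$ and $\mu(a,b)^2=\lambda_m(a,b)$.

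\textbf{Step 1: roots of $D$.} I would first check that
\[
D(w)=\prod_{(a,b)\in G_m\setminus Z}\bigl(w-\mu(a,b)\bigr).
\]
The right-hand side is monic of degree $(m-1)(m-2)$ and even, since $(a,b)\mapsto(-a,-b)$ is a fixed-point-free involution of $G_m\setminus Z$ that negates $\mu$. It also reproduces \eqref{eq:generarating_series}. Indeed, pairing $\pm\mu$ gives
\[
-w\frac{D'(w)}{D(w)}=(m-1)(m-2)-\sum_{j\notin Z}\frac{\lambda_m(j)}{\lambda_m(j)-w^2},
\]
hence $\sum_{n\ge1}w^{2n}\sum_j\lambda_m(j)^{-n}$, which matches Theorem~\ref{thm:maina} at $w^2=c_m\tau$. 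Since $u,v$ range over roots of $u^m=i^m$ in the resultant, this identification should follow by evaluating $W$ at $u=ie^{2\pi i a/m}$, $v=ie^{2\pi i b/m}$. I expect this step to be the main technical obstacle: one has to track the powers of $w$ and $2$ removed in \eqref{eq:resultant_first} and the sign convention for $\mu$. I would extract it from the proof of Theorem~\ref{thm:mainb}, where it must already be established.

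\textbf{Step 2: the $\mathbb Z/3$ symmetry.} Let $\sigma(a,b):=(b,-a-b)$. This map has order $3$, preserves $Z$, and satisfies
\[
\mu(\sigma(a,b))=4\,s(b)\,s(-a-b)\,s(-a)=\mu(a,b),
\]
because the two sign flips cancel. Hence the roots of $D$ are constant on $\sigma$-orbits. A fixed point satisfies $b=a$ and $3a\equiv0$. The point $(0,0)$ lies in $Z$. When $3\mid m$, the points $(m/3,m/3)$ and $(2m/3,2m/3)$ are fixed and lie outside $Z$, with
\[
\mu=\pm4\bigl(\tfrac{\sqrt3}{2}\bigr)^3=\pm\tfrac{3\sqrt3}{2},
\]
so together they contribute the factor $w^2-\tfrac{27}{4}$. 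All other orbits have size $3$. Choosing orbit representatives $\mathcal O$ and setting $E(w):=\prod_{\mathcal O}(w-\mu_{\mathcal O})$ gives $D=E^3$ or $D=(w^2-\tfrac{27}{4})E^3$, with the stated degrees.

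\textbf{Step 3: rationality.} First, $D\in\mathbb Q[w]$. Since $D$ is even, $D(w)=\prod_{\{j,-j\}}(w^2-\lambda_m(j))$. The $\lambda_m(j)$ lie in $\mathbb Q(\zeta_m)$, and $\mathrm{Gal}(\mathbb Q(\zeta_m)/\mathbb Q)$ acts on them by $(a,b)\mapsto(ka,kb)$ for $k\in(\mathbb Z/m)^\times$. This map permutes $G_m\setminus Z$ and commutes with negation, so it preserves the multiset of $\lambda$'s over pairs. Hence the coefficients of $D$ are rational. Next, $E$ is the unique monic polynomial whose cube is $D$, or $D/(w^2-\tfrac{27}{4})$ when $3\mid m$, which also lies in $\mathbb Q[w]$. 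Any field automorphism of $\mathbb C$ sends $E$ to another monic cube root of the same rational polynomial, hence to $E$ itself. Therefore $E\in\mathbb Q[w]$.
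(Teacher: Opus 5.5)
Your proposal is correct and is essentially the paper's argument: the permutation symmetry of the triple $(a,b,-a-b)$ makes every root multiplicity of $D$ divisible by $3$, except at the two fixed triples $(m/3,m/3,m/3)$ and $(2m/3,2m/3,2m/3)$ when $3\mid m$; and $E\in\mathbb Q[w]$ follows from Galois invariance of the unique monic cube root. Your Step 1 is exactly \eqref{eq:det_polynomial} together with Theorem~\ref{thm:second-resultant}, with your $\mu(a,b)=-r_{a,b}=r_{-a,-b}$, which is harmless since $D$ is even. Using only the cyclic subgroup $\langle\sigma\rangle\subset S_3$ is a mild streamlining: orbits have size $1$ or $3$, so there is no need to square the size-$6$ factors or to invoke Lemma~\ref{lem:concavity}. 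Also, your intermediate display has the wrong overall sign: it should read $-wD'(w)/D(w)=\sum_{j\notin Z}\lambda_m(j)/(\lambda_m(j)-w^2)-(m-1)(m-2)$, although your stated conclusion $\sum_{n\ge1}w^{2n}\sum_j\lambda_m(j)^{-n}$ is right.
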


From Theorem \ref{thm:cube_first}, we have that
$$
\frac{D'}{D} = \frac{3E'}{E} + \frac{2w}{w^{2}-27/4} \mathbf 1_{3\mid m}
$$
where $\mathbf 1_{3\mid m} = 1$ if $3$ divides $m$ and zero otherwise, and the prime denotes the derivative in $w$. Therefore,
Theorem \ref{thm:cube_first} implies that the length of the recursive sequence is at
most $\frac{\deg E}{2} + \mathbf 1_{3\mid m}$.
In most examples, the values attained by $V_{n}(m)$ are very large, so the reduction
in the length of the recursive formula is significant when obtaining numerical evaluations.

There is one additional noteworthy aspect of Theorem \ref{thm:cube_first}, namely that in the generating series
\eqref{eq:generarating_series}, the right-hand-side has simple poles.
Therefore, from the elementary expansion
$$
\frac{bx}{x-a} = -b\sum\limits_{n=1}^{\infty}(x/a)^{n}
\,\,\,\,\,
\textrm{\rm for constants $a, \, b$ and $a\neq 0$ and  $|x|<|a|$},
$$
we get the following expression for Verlinde sums in terms of the roots of $E$.

\begin{theorem}
\label{thm:algebraic_sum} With notation as above, let $\mathcal{E}_{m} = \{a\}$ be the set of all distinct roots of $E$, each with multiplicity
$\mu(a)$.  Then, for any $n \geq 1$, we have that
\begin{equation}\label{eq:algebraic_sum_formula1}
V_n(m) \;=\; \mathbf 1_{3\mid m}\cdot 2\Big(\frac{m}{3}\Big)^{2n} \;+\; 3\sum_{a \in \mathcal{E}_m}
\mu(a)\left(\frac{3m^2}{4a^2}\right)^{\!n}.
\end{equation}
\end{theorem}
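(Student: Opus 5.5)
We start from the generating-series identity \eqref{eq:generarating_series} of Theorem \ref{thm:mainb} and substitute the factorization of Theorem \ref{thm:cube_first}. Writing $w=\sqrt{c_m\tau}$, we get
\[
w\frac{D'(w)}{D(w)} \;=\; 3\,w\frac{E'(w)}{E(w)} \;+\; \mathbf 1_{3\mid m}\,\frac{2w^2}{w^2-27/4}.
\]
Factor $E(w)=\prod_{a\in\mathcal E_m}(w-a)^{\mu(a)}$ over $\mathbb C$. Since $D$ is monic, $E$ may be taken monic.

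Next we check that $0$ is not a root. Since $D(0)=\prod_{j\notin Z}\lambda_m(j)\neq 0$, up to the normalization relating $D$ to the reduced spectral polynomial $H$ of Theorem \ref{thm:maina}, every $a\in\mathcal E_m$ is nonzero. Hence
\[
w\frac{E'(w)}{E(w)}=\sum_{a}\mu(a)\frac{w}{w-a}.
\]

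We then use that $D$, and hence $E$ up to sign, is even, as established in Theorem \ref{thm:mainb}. Because $D$ is even and $w^2-27/4$ is even, $E(-w)^3=\pm E(w)^3$, so $E(-w)=\pm E(w)$. Thus the roots of $E$ come in pairs $\pm a$ with equal multiplicity. This pairing lets us write the sum in terms of $w^2$:
\[
\sum_a\mu(a)\frac{w}{w-a}=\frac12\sum_a\mu(a)\Big(\frac{w}{w-a}+\frac{-w}{-w-a}\Big)=\sum_a\mu(a)\frac{w^2}{w^2-a^2}.
\]
Here the second form comes from replacing $a$ by $-a$ in the sum, which is legitimate because multiplicities agree.

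Now expand each term for $|w^2|<\min|a|^2$, using the elementary expansion quoted before the theorem:
\[
-\frac{w^2}{w^2-a^2}=\sum_{n\ge1}\Big(\frac{w^2}{a^2}\Big)^n=\sum_{n\ge1}\Big(\frac{c_m\tau}{a^2}\Big)^n .
\]
The same expansion gives
\[
-\frac{2w^2}{w^2-27/4}=2\sum_{n\ge1}\Big(\frac{4c_m\tau}{27}\Big)^n=2\sum_{n\ge1}\Big(\frac{m^2}{9}\Big)^n\tau^n .
\]

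Inserting these into \eqref{eq:generarating_series} and comparing coefficients of $\tau^n$ for $n\ge1$ gives
\[
V_n(m)=\mathbf 1_{3\mid m}\,2\Big(\frac m3\Big)^{2n}+3\sum_a\mu(a)\Big(\frac{3m^2}{4a^2}\Big)^n .
\]
The constant term $(m-1)(m-2)$ matches $V_0(m)$ and is not needed. Both sides are analytic near $\tau=0$, since all poles are nonzero, so the coefficient comparison is justified.

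The only delicate point is the evenness and pairing step for $E$. It relies on $D$ being even, from Theorem \ref{thm:mainb}, together with the fact that the cube root of an even or odd polynomial is again even or odd. If $E$ were odd, $0$ would be a root, contradicting $D(0)\ne0$; so $E$ is even. No other obstacle is expected.
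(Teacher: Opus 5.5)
Your proof is correct and is essentially the paper's argument: both substitute the factorization of Theorem \ref{thm:cube_first} into the generating series of Theorem \ref{thm:mainb} and expand the simple poles of $wE'(w)/E(w)$ geometrically. The only differences are cosmetic: you pair $a$ with $-a$ first to get $w^2/(w^2-a^2)$, whereas the paper expands in powers of $w/a$ and lets the odd powers cancel by the $\pm$ symmetry of the roots, and your explicit checks that $E$ is even and that $0$ is not a root spell out what the paper leaves implicit (one small slip there: it is $D(0)^2=H(0)=\prod_{j}\lambda_m(j)$, not $D(0)$ itself, but $D(0)\neq 0$ is all you use).
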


Theorem \ref{thm:algebraic_sum} is the analogue of the classical Binet formula for linear recurrence sequence.
In effect, the factor of $2$ in \eqref{eq:algebraic_sum_formula1} is because of the change of variables
from $w$ to $\tau$, and the additional factor of $3$ in the sum over roots of $E$ is from Theorem \ref{thm:cube_first}.
Any additional appearance of repeated roots of $E$ manifests itself through the factor $\mu(a)$. In the case
$m=10$ which we give in the Appendix below, the resultant $D$ has degree $72$, yet we obtain a Binet formula involving $4$ pairs of algebraic
integers in $\mathbb{Q}(\sqrt{5})$.

Alternatively, let $\widehat E(x)=E(w)|_{w^2=x}$ and define its reciprocal polynomial by
\[
\widehat E^\vee(y)=y^{\deg\widehat E}\widehat E(1/y).
\]
Let $\mathcal E^\vee$ be the set of distinct roots of $\widehat E^\vee$, with multiplicities $\mu(\alpha)$.  Then
\begin{equation}\label{eq:algebraic_sum_formula2}
V_n(m)=\mathbf1_{3\mid m}\,2\left(\frac m3\right)^{2n}
+6\cdot3^n\left(\frac m2\right)^{2n}
\sum_{\alpha\in\mathcal E^\vee}\mu(\alpha)\alpha^n.
\end{equation}


\subsection{An algorithm for Verlinde sums for \texorpdfstring{$\text{\rm SU}(3)$}{SU(3)}} \label{sec: alg}

When combining the results above, we obtain the following effective algorithm to explicitly compute $V_n(m)$.

\medskip
\noindent\textbf{Algorithm (Computation of $V_n(m)$).}
\begin{quote}
\textsc{Input:} Integers $m\ge3$ and $n\ge1$.

\begin{enumerate}
\item[\textup{1.}] Let $Q(u):=u^m-i^m$ and $W(u,v,w) := iu^2v^2+u^2v+uv^2-2wuv+u+v-i$,
and compute the iterated resultant
$$
D(w) \;=\; \frac{1}{2^{m^2}w^{3m-2}}\,\mathrm{Res}_u\Big(Q(u),\ \mathrm{Res}_v\big(Q(v),\,W(u,v,w)\big)\Big).
$$
\item[\textup{2.}] Extract the reduced polynomial $E(w)$
and let $\widehat E(x):=E(w)|_{w^2=x}$.
\item[\textup{3.}] Let
$E^\vee(y):=y^{\deg\widehat E}\,\widehat E(1/y)$.  Let $\mathcal{E}^{\vee}$ be the set of
distinct roots of $E^{\vee}$
where each root $\alpha$ has multiplicity $\mu(\alpha)$.  Set
$$
p_n \;:=\; \sum_{\alpha \in \mathcal{E}^{\vee}} \mu(\alpha) \alpha^n.
$$
\end{enumerate}

\textsc{Output:} The Verlinde sum $V_{n}(m)$ is given by
\begin{equation}\label{eq:Verlinde_power_sum}
V_n(m) \;=\; \mathbf{1}_{3\mid m}\cdot 2\Big(\frac{m}{3}\Big)^{2n} \;+\; 6\cdot 3^{n}\Big(\frac{m}{2}\Big)^{2n}\,p_n.
\end{equation}
\end{quote}
\medskip

\subsection{Organization of this paper} In section \ref{sec:circulant-torus} we establish notation and
recall background information.  In particular, we discuss the spectral theory
on the circulant torus, which is needed for Theorem \ref{thm:mainb}, and basic facts for the resultant and linear recurrences, which
is needed for the other results stated above.  In section \ref{sec:Proof_of_Maina} we prove Theorem
\ref{thm:maina}.  In Section \ref{sec:first-construction} we prove the first resultant construction on $G_{m}$,
present a few examples, and discuss its inefficiency.   Our second
resultant construction is given in Section \ref{sec:second-construction}, namely formula \eqref{eq:resultant_first}.
Further examples are given which demonstrate the improvement over the resultant
construction from Section \ref{sec:first-construction}.  In Section \ref{sec:generating-series} we complete the proof of Theorem \ref{thm:mainb} and derive, from the rational
generating function, the linear recurrences for $V_n(m)$; these use the function $D$ from \eqref{eq:resultant_first}.  In Section \ref{sec:cubesection} we prove Theorem \ref{thm:cube_first} which allows one
to repeat the analysis from \ref{sec:first-construction} and prove recursion formulas which are much shorter.
Many examples are given which compare the different recursion formulas given.
Also, in Section \ref{sec:cubesection}, we prove Theorem \ref{thm:algebraic_sum}, and  give some comments as to why one could expect the length of the recursion
formulas to shorten even further.  In Appendix we give several examples  with $m=5,6,7,9,10,14$ in their entirety.

\section{Resultants, linear recurrences and circulant tori}
\label{sec:circulant-torus}

\subsection{The resultant}
\label{subsec:sylvester}
Let $f(t)=\sum_{i=0}^m f_it^i$ and $g(t)=\sum_{j=0}^n g_jt^j$ be non-zero
polynomials over a commutative ring of degrees $m,n$ respectively.
We assume $f_{m} \neq 0$ and $g_{n} \neq 0$.
The Sylvester matrix $\mathrm{Syl}(f,g)$ is the
$(m+n)\times(m+n)$ matrix whose first $n$ rows are successive shifts of
the coefficient vector of $f$ and whose last $m$ rows are successive
shifts of the coefficient vector of $g$:
\begin{equation}\label{eq:sylvester_matrix}
\text{Syl}(f, g) =
\begin{pmatrix}
f_m    & f_{m-1} & \cdots & f_0    & 0       & \cdots & 0 \\
0      & f_m     & f_{m-1} & \cdots & f_0     & \cdots & 0 \\
\vdots &         & \ddots  & \ddots &         & \ddots & \vdots \\
0      & \cdots  & 0       & f_m    & f_{m-1} & \cdots & f_0 \\
g_n    & g_{n-1} & \cdots & g_0    & 0       & \cdots & 0 \\
0      & g_n     & g_{n-1} & \cdots & g_0     & \cdots & 0 \\
\vdots &         & \ddots  & \ddots &         & \ddots & \vdots \\
0      & \cdots  & 0       & g_n    & g_{n-1} & \cdots & g_0
\end{pmatrix}
\end{equation}
 The resultant of $f$ and $g$, denoted by $\Res(f,g)$, is the determinant of $\mathrm{Syl}(f,g)$, written as
$$
\Res(f,g) := \det\mathrm{Syl}(f,g).
$$
The resultant $\Res(f,g)$ is a polynomial with integer coefficients in the
coefficients of $f$ and $g$.  The following lemma summarizes well-known
properties of $\Res(f,g)$, which we present here for the convenience
of the reader.

\begin{lemma}
\label{prop:resultant-basic}
Assume the notation as above and that the commutative ring is a field.
\begin{enumerate}[label=(\roman*)]
\item We have that $\Res(f,g)=0$ if and only if
positive degree polynomials $f$ and $g$ have a nonconstant common
factor.
\item  If $f$ factors completely into linear factors, perhaps in some splitting field, and has roots
$\alpha_1,\dots,\alpha_m$, counted with multiplicity, then
\begin{equation}
\label{eq:product-formula}
\Res(f,g) = f_m^n\prod_{i=1}^m g(\alpha_i).
\end{equation}
In particular, if $f$ is monic, then $\Res(f,g)=\prod_{i=1}^m g(\alpha_i)$.
Symmetrically, if $g$ factors completely into linear factors and has roots $\beta_1,\dots,\beta_n$, then
$\Res(f,g)=(-1)^{mn}g_n^m\prod_{j=1}^n f(\beta_j)$.
\end{enumerate}
\end{lemma}

Classically, resultants have been used to show, for example, that the sum of two algebraic integers
is an algebraic integer.  To do so, one forms the polynomial in $y$ obtained by computing
the resultant in $x$ of $\tilde{f}(x) = f(x)$ and $\tilde{g}(x) = g(y-x)$, which we write
as $\Res_{x}(\tilde{f},\tilde{g})(y)$.  By \eqref{eq:product-formula}, $\Res_{x}(\tilde{f},\tilde{g})(y)$
is zero if and only if $y = \alpha + \beta$ where $\alpha $ is a root of $f$ and $\beta$ is
a root of $g$.  If both $\tilde{f}$ and $\tilde{g}$ are monic, then so is $\Res_{x}(\tilde{f},\tilde{g})(y)$,
thus showing that $\alpha + \beta$ is an algebraic integer if both $\alpha$ and $\beta$ are algebraic integers.

We will use a similar consideration in this article in our proof of Theorem \ref{thm:mainb}.

\subsection{Closed form solutions of linear recurrences}
Assume the sequence $\{a_{k}\}$ for $k \geq 0$ satisfies an $n$ step linear homogeneous recurrence relation with constant
coefficients.  In other words, for $k \ge n$ we have that
\begin{equation}
\label{eq:app-recurrence}
a_k = c_1a_{k-1}+c_2a_{k-2}+\cdots+c_na_{k-n},
\end{equation}
with $n$ prescribed initial terms $\{a_0,a_1,\dots,a_{n-1}\}$.
Substituting $a_h=r^h$ for all $h$ into \eqref{eq:app-recurrence} yields the
degree $n$ characteristic equation
\begin{equation}
\label{eq:app-char}
f(r):= r^n - c_1r^{n-1} - c_2r^{n-2} - \cdots - c_{n-1}r - c_n = 0.
\end{equation}
Let $\{r_{j}\}_{j=1}^n$ denote the roots of the characteristic polynomial \eqref{eq:app-char}, computed
in a splitting field and counted with multiplicity.
If all roots $\{r_1,r_2,\dots,r_n\}$ are distinct, then the explicit closed
form for $a_{k}$ is given as
\begin{equation}
\label{eq:app-distinct}
a_k = C_1(r_1)^k+C_2(r_2)^k+\cdots+C_n(r_n)^k
\end{equation}
where the constants $C=[C_1,\dots,C_n]^T$ are obtained by solving the
$n\times n$ Vandermonde linear system
\[
\begin{pmatrix}1&1&\cdots&1\\r_1&r_2&\cdots&r_n\\\vdots&\vdots&&\vdots\\r_1^{n-1}&r_2^{n-1}&\cdots&r_n^{n-1}\end{pmatrix}
\begin{pmatrix}C_1\\C_2\\\vdots\\C_n\end{pmatrix} = \begin{pmatrix}a_0\\a_1\\\vdots\\a_{n-1}\end{pmatrix}.
\]
If a root $r_j$ repeats with multiplicity $\mu$, its
contribution to the solution of $a_{k}$ is not $C_{j}r_{j}^k$ but rather is
a polynomial in $k$ of degree $\mu-1$, namely
\begin{equation}
\label{eq:app-repeated}
\big(B_0+B_1k+B_2k^2+\cdots+B_{\mu-1}k^{\mu-1}\big)(r_j)^k.
\end{equation}
One no longer uses the Vandermonde linear system to determine the coefficients $\{B_{j}, C_{j}\}$,
but instead forms the obvious system of equations which determines the coefficients from the first
$n$ terms in the recursive sequence.

Define the ordinary generating function of the sequence $\{a_{k}\}$ as the formal infinite power
series $A(x)=\sum_{k=0}^\infty a_kx^k$.  It can be shown that $A$ is a  rational function,
meaning for $\vert x \vert$ sufficiently small, we have that
\begin{equation}
\label{eq:app-genfunc}
A(x) = \frac{P(x)}{1-c_1x-c_2x^2-\cdots-c_nx^n}.
\end{equation}
The denominator is the reciprocal characteristic polynomial, meaning $r^{n}f(1/r)$, and the
numerator $P(x)$ is a polynomial of degree $\le n-1$ encoding the
initial conditions of the recursive sequence.  More precisely, $P(x)$ is given by
\[
P(x) = \sum_{i=0}^{n-1}\Big(a_i-\sum_{j=1}^ic_ja_{i-j}\Big)x^i.
\]

If we factor the denominator into linear terms, then we can expand the denominator using a
geometric series for each term and obtain the closed formula \eqref{eq:app-distinct}
for each $a_{k}$ as sum of $k$-th powers of the roots
of the characteristic polynomial.

For the questions we consider in this article, the characteristic polynomial has distinct roots, so
\eqref{eq:app-repeated} does not play a role.

\subsection{Discrete and circulant tori}
For an integer $m\ge1$, the discrete circle $\mathbb Z/m\mathbb Z$ is the
graph on $m$ vertices with edges joining nearest neighbors, a $2$-regular
graph. For $M=(m_1,\dots,m_k)$, the discrete torus $\mathrm{DT}(M)$ is
the Cayley graph of $\prod_j\mathbb Z/m_j\mathbb Z$ with respect to the
standard basis vectors. Throughout, we take $k=3$, $m_1=m_2=m_3=m$, and
write $\mathrm{DT}(m)$ for the resulting $6$-regular graph on $m^3$
vertices.

Set $\mathrm{DT}(m)=(\mathbb Z/m\mathbb Z)^3$.  The vertex
$(x_1,x_2,x_3)$  is adjacent to $(x_1',x_2',x_3')$ if and only if
the two ordered triples differ by $\pm1$ in exactly one coordinate. Define the
quotient graph
$$
G_m := \mathrm{DT}(m)/\!\sim
\,\,\,\,\,
\text{\rm where}
\,\,\,\,\, (x_1,x_2,x_3)\sim(x_1+1,x_2+1,x_3+1).
$$
The cyclic group $Z/m\mathbb Z$, which is
generated by the diagonal shift which defines $G_{m}$,
acts freely on $\mathrm{DT}(m)$ with
quotient $G_m$.  Let us write $(x_1,x_2)$ for the coset of $(x_1,x_2,0)$, so
that the vertex $(x_1,x_2,x_3)\in\mathrm{DT}(m)$ lies in the coset
$(x_1-x_3,x_2-x_3)$. By tracking the edges to $(x_1,x_2,0)$ under the
quotient map, we can show that $(x_{1},x_{2})$ has six edges in $G_{m}$.
First, we have the four edges to $(x_1\pm1,x_2,0)$ and $(x_1,x_2\pm1,0)$ in $\mathrm{DT}(m)$
which descend to the edges $(x_1,x_2)\sim(x_1\pm1,x_2)$ and
$(x_1,x_2)\sim(x_1,x_2\pm1)$ in $G_m$.  Second, we have the two edges to
$(x_1,x_2,\pm1)$ in $\mathrm{DT}(m)$ which descend to edges $(x_1,x_2)\sim(x_1\mp 1,x_2\mp 1)$, respectively,
in $G_{m}$.
Thus $G_m$ is a $6$-regular graph on $m^2$ vertices
with $3m^2$ edges, and $\mathrm{DT}(m)\to G_m$ is an $m$-to-$1$ regular
covering. We call $G_m$ a \emph{circulant torus}, see Fig. \ref{fig:cayley-299}.

\subsection{Eigendata on the discrete torus}

For $d\in G_m$, let $T_d$ denote the operator
$$
 (T_df)(x):=f(x+d).
$$
These operators commute and are unitary, $T^*_d=T_{-d}$.

With this notation one can define a directional Laplacian as follows:
$$
\Delta_d =(T_d-I)^*(T_d -I)=2I-T_d-T_{-d}.
$$

Define a corresponding higher-order Laplace operator:
$$
\Delta_m = \frac 14 \Delta_{e_1}\Delta_{e_2}\Delta_{e_1+e_2}.
$$
This is related to Weyl's denominator formula for $\text{\rm SU}(3)$.

An alternative to $\Delta_m$ is the following. Note the elementary identity for any real $\alpha$ and $\beta$,
$$
4\sin\alpha\sin\beta\sin(\alpha+\beta) = \sin2\alpha+\sin2\beta-\sin2(\alpha+\beta).
$$
This has the operator manifestation as
$$
A_ m= \tfrac{i}{6}(T_{e_1}-T_{-e_1}+T_{e_2}-T_{-e_2}+T_{-e_1-e_2}-T_{e_1+e_2}).
$$
The triangles can be oriented so that $e_1$, $e_2$, and $-e_1 -e_2$ provide the corresponding edge orientation. The above operator corresponds to the oriented graph adjacency operator. Note that
$$
\Delta_m = 9A_m^*A_m.
$$

For $j=(a,b)\in (\mathbb Z/m\mathbb Z)^2$, define
$$
 \chi_{j}(x_1,x_2):=
 \frac1m\exp\left(2\pi i(ax_1+bx_2)/m\right).
$$
These characters form an orthonormal basis of
$\ell^2(G_m)$.  They also serve as eigenfunctions to the basic operators
$$
 T_{(u,v)}\chi_{j}
 =\exp \left(2\pi i(au+bv)/m\right)\chi_{j}
$$
and hence also of  $\Delta_m$.



The spectral zeta function is a sum over the non-zero eigenvalues, more precisely
$$
\zeta_{\Delta_m}(s)= \sum_{j\in G_m \setminus Z} \lambda_m (j)^{-s}
$$
where $Z= \{ j=(a,b):a=0, b=0, \mathrm{ or }\,\, a+b \equiv 0 \}.$ The number of terms in this sum is
\[
M = m^2-(3m-2) = (m-1)(m-2).
\]

The reduced spectral polynomial is
$$
H(w)=\prod_{j\in G_m \setminus Z}(\lambda_m(j)-w)
$$
Take its logarithmic derivative and define
$$
F(w)=-\frac{1}{m^2}\frac{H'(w)}{H(w)}.
$$
This is clearly a rational function.


Let
\[
 r_{a,b}
 =-\sin(2\pi a/m)-\sin (2\pi b/m)+\sin(2(a+b)\pi /m).
\]
In view of the trigonometric identity just mentioned,
$$
 r_{a,b}^2=\lambda_m((a,b))
 =16\sin^2\frac{\pi a}{m}\sin^2\frac{\pi b}{m}
 \sin^2\frac{\pi(a+b)}{m}.
 $$

We also define
\[
m^2 G(w)=m^{2} G(x,x;w) := \sum_{j\in G_m\setminus Z}\frac{1}{w-r_j},
\]
which can be viewed as a resolvent kernel for any $x\in G_m$.

Define another spectral determinant polynomial by
\begin{equation}\label{eq:det_polynomial}
D(w) := \prod_{j\in G_m \setminus Z}(w-r_j).
\end{equation}
Then $D(w)$ is
a monic polynomial of degree $(m-1)(m-2)$ in $w$ and whose roots counted
with multiplicity are
$r_j$.
The involution $j\mapsto-j$ sends $r_j$ to $-r_j$, so $D$ is even. Therefore it can be written $D(w)=\hat {D}(w^2)$. Then $H(w)=\hat{D}(w)^2$.

Note that
\[
m^{2}G(w) = \frac{D'(w)}{D(w)} = \frac{d}{dw}\log D(w)
\]
for any vertex $x \in G_{m}$. Moreover,
\[
F(w):= F(x,x;w)=-\frac1{2\sqrt w}
 \bigl(G(x,x;\sqrt w)-G(x,x;-\sqrt w)\bigr).
\]

We will use the following notation. Let us set
\begin{equation}
\label{eq:beta-Lambda}
\Lambda_{m}(j) := \frac13\sum_{\ell=1}^3\cos\frac{2\pi(j_\ell+m/4)}{m}
\end{equation}
Then we have that
\begin{equation}\label{eq. 3Lambda explicit}
3\Lambda_{m}(j) = (-1)^\ell4\sin\frac{\pi j_1}{m}\sin\frac{\pi j_2}{m}\sin\frac{\pi j_3}{m}
\,\,\,\,\,
\text{\rm whenever $j_1+j_2+j_3=\ell m$.}
\end{equation}
For $(j_1,j_2,j_3)\in(\mathbb Z/m\mathbb Z \setminus \{0\})^3$ admissible values of $\ell$ for which $j_1+j_2+j_3=\ell m$ are $\ell\in\{1,2\}$.
Let
$$
A(m):=\{j\in(\mathbb Z/m\mathbb Z)^3 : j_1+j_2+j_3\equiv0\}
\,\,\,\,\,
\text{\rm and}
\,\,\,\,\,
A_0(m):=\{j\in A(m):\Lambda_{m}(j)=0\}.
$$
There are $m^{3}$ ordered triples $(j,\ell, k) \bmod m$, of which $m^2$ are such that
$j+\ell+k\equiv0\bmod m$, so $A(m)$ has $m^{2}$ elements.
Among the $m^2$ ordered triples
$(j,\ell,k)\bmod m$ with
$j+\ell+k\equiv0$, there are $3m-2$ triples that have at least one coordinate congruent to zero modulo $m$,
so $A_0(m)$ has $3m-2$ elements.


Define $i_m:G_m \rightarrow A(m)$ via $i_m(a,b)=(a,b,-a-b)$.
Note that $r_{a,b}=3\Lambda_m (i_m(a,b))$ and $\lambda_m((a,b))=r_{a,b}^2$.

\section{Proof and examples of Theorem \ref{thm:maina}}
\label{sec:Proof_of_Maina}

\subsection{Proof of Theorem \ref{thm:maina}}

\begin{proof}
Fix \(m\geq 3\) and set \(c_m=3m^2/4\).
For \(j=(a,b)\in G_m\) and \(d=(u,v)\in G_m\), the Fourier
characters defined above satisfy
\[
T_d\chi_j
=
\exp\!\left(\frac{2\pi i(au+bv)}{m}\right)\chi_j.
\]
Consequently,
\begin{align*}
\Delta_d\chi_j
&=
\left(
2-\exp\left(\frac{2\pi i(au+bv)}{m}\right)
 -\exp\left(-\frac{2\pi i(au+bv)}{m}\right)
\right)\chi_j \\
&=
4\sin^2\left(\frac{\pi(au+bv)}{m}\right)\chi_j.
\end{align*}
Since
\[
\Delta_m=\frac14\Delta_{e_1}\Delta_{e_2}\Delta_{e_1+e_2},
\]
we obtain
\[
\Delta_m\chi_j=\lambda_m(j)\chi_j,
\qquad
\lambda_m(j)
=
16\sin^2\left(\frac{\pi a}{m}\right)
  \sin^2\left(\frac{\pi b}{m}\right)
  \sin^2\left(\frac{\pi(a+b)}{m}\right).
\]
The characters form an orthonormal basis of \(\ell^2(G_m)\),
so these are all the eigenvalues, counted with multiplicity.
Moreover, \(\lambda_m(j)=0\) precisely when
\(a=0\), \(b=0\), or \(a+b=0\) in \(\mathbb Z/m\mathbb Z\).
Thus the nonzero eigenvalues are indexed by \(G_m\setminus Z\),
and they are strictly positive.

The map
\[
(a,b)\longmapsto(a,b,-a-b)
\]
is a bijection from \(G_m\setminus Z\) onto the set of triples
occurring in the definition of \(V_n(m)\).
For the corresponding triple \((a,b,k)\), we have
\(k\equiv-a-b\pmod m\), and hence
\[
\sin^2\left(\frac{\pi k}{m}\right)
=
\sin^2\left(\frac{\pi(a+b)}{m}\right).
\]
It follows that
\[
\left(
8\sin\left(\frac{\pi a}{m}\right)
 \sin\left(\frac{\pi b}{m}\right)
 \sin\left(\frac{\pi k}{m}\right)
\right)^2
=
4\lambda_m((a,b)).
\]
Therefore, for every integer \(n\geq 1\),
\begin{align*}
V_n(m)
&=
3^n m^{2n}
\sum_{j\in G_m\setminus Z}
\bigl(4\lambda_m(j)\bigr)^{-n} \\
&=
\left(\frac{3m^2}{4}\right)^n
\sum_{j\in G_m\setminus Z}\lambda_m(j)^{-n} \\
&=
c_m^n\zeta_{\Delta_m}(n).
\end{align*}
This proves the first identity.

Next, logarithmic differentiation of the finite product
\[
H(w)=\prod_{j\in G_m\setminus Z}\bigl(\lambda_m(j)-w\bigr)
\]
gives
\[
\frac{H'(w)}{H(w)}
=
-\sum_{j\in G_m\setminus Z}
\frac{1}{\lambda_m(j)-w}.
\]
By the definition of \(F\), we therefore have
\[
m^2F(w)
=
-\frac{H'(w)}{H(w)}
=
\sum_{j\in G_m\setminus Z}
\frac{1}{\lambda_m(j)-w}.
\]
In particular, \(F\) is rational and is holomorphic near \(w=0\),
since \(H(0)\neq 0\).

Set
\[
\lambda_*:=\min_{j\in G_m\setminus Z}\lambda_m(j)>0.
\]
For \(|w|<\lambda_*\), each summand has the geometric expansion
\[
\frac{1}{\lambda_m(j)-w}
=
\sum_{k=0}^{\infty}
\frac{w^k}{\lambda_m(j)^{k+1}}.
\]
These expansions converge absolutely and uniformly on every
closed disk \(|w|\leq\rho<\lambda_*\).
Since the sum over \(j\) is finite, we may interchange the
summations to obtain
\begin{align*}
m^2F(w)
&=
\sum_{k=0}^{\infty}
w^k
\sum_{j\in G_m\setminus Z}\lambda_m(j)^{-k-1} \\
&=
\sum_{n=1}^{\infty}
\zeta_{\Delta_m}(n)w^{n-1} \\
&=
\sum_{n=1}^{\infty}
c_m^{-n}V_n(m)w^{n-1}.
\end{align*}
Taking the \((n-1)\)-st derivative at \(w=0\) yields
\[
m^2F^{(n-1)}(0)
=
(n-1)!\,c_m^{-n}V_n(m).
\]
Since \(c_m^n=3^n(m/2)^{2n}\), this is equivalent to
\[
V_n(m)
=
3^n\left(\frac{m}{2}\right)^{2n}
\frac{m^2}{(n-1)!}
\left.
\frac{\partial^{n-1}}{\partial w^{n-1}}F(w)
\right|_{w=0},
\]
as claimed.

Finally, multiplying the power-series expansion of \(m^2F(w)\)
by \(w\) and substituting \(w=c_m\tau\), we obtain
\[
c_m\tau\,m^2F(c_m\tau)
=
\sum_{n=1}^{\infty}
c_m^{-n}V_n(m)(c_m\tau)^n
=
\sum_{n=1}^{\infty}V_n(m)\tau^n.
\]
This identity holds analytically for
\[
|\tau|<\frac{\lambda_*}{c_m},
\]
and the series converges absolutely and uniformly on every
closed disk \(|\tau|\leq R<\lambda_*/c_m\).

The same geometric expansions are valid as formal power series,
because every factor \(\lambda_m(j)-w\) has a nonzero constant
term. Hence the generating-function identity also holds as an
identity of formal power series in \(\tau\).
\end{proof}

\subsection{Examples}
\label{sec:examples}

The examples in this section are built using the precise information for the
spectrum of $G_{m}$.  Admittedly, these computations are somewhat inefficient.  We are presenting them
here as a benchmark for comparison with later examples.

\begin{example}[$m=3$]
\label{ex:m3}
The two nonzero eigenvalues are $r = \pm\frac{3\sqrt3}{2}$, each of multiplicity $1$. Then
\[
m^2 G(w) = \frac{1}{w - \tfrac{3\sqrt3}{2}} + \frac{1}{w+\tfrac{3\sqrt3}{2}}
= \frac{2w}{w^2 - \tfrac{27}{4}} = \frac{8w}{4w^2-27}.
\]
Since $m^2 = 9$, we get that
\[
G(w) = \frac{8w}{9(4w^2-27)}
\]
and
\[
F(w) = -\frac{G(\sqrt w)}{\sqrt w} = -\frac{8}{9(4w-27)} = \frac{8}{9(27-4w)}.
\]
When expanding as a geometric series about $w=0$, we have that
\[
F(w) = \frac{8}{243}\sum_{k\ge0}\Bigl(\frac{4w}{27}\Bigr)^k
= \sum_{k\ge0} \frac{8}{243}\Bigl(\frac{4}{27}\Bigr)^k w^k,
\]
so $\partial_w^{n-1}F(w)\big|_{w=0} = (n-1)!\,\frac{8}{243}\bigl(\tfrac{4}{27}\bigr)^{n-1}$, and
\[
V_n(3) = 3^n\Bigl(\frac{3}{2}\Bigr)^{2n}\cdot 9\cdot \frac{8}{243}\Bigl(\frac{4}{27}\Bigr)^{n-1} = 2
\]
for every $n \geq 1$.  Thus, $V_n(3) = 2$ for all $n \ge 1$.  Separately, since $c_3 = 3(3/2)^2 = 27/4$, we get that
\[
\sum_{n\ge1} V_n(3)\,\tau^n = c_3\tau\cdot 9\cdot F(x,x;c_3\tau)
= \frac{27}{4}\tau\cdot 9\cdot \frac{8}{9\bigl(27 - 27\tau\bigr)}
= \frac{2\tau}{1-\tau} = 2\sum_{n\ge1}\tau^n,
\]
again giving $V_n(3)=2$ for every $n\ge1$.
\end{example}

\begin{example}[$m=5$]
\label{ex:m5}
The twelve nonzero eigenvalues fall into four classes of values each with multiplicity $3$.  Specifically,
the eigenvalues are of the form $r_j = \pm a_1$ and $r_j=\pm a_2$ where
\[
a_1 = \sqrt{\frac{25+5\sqrt5}{8}}, \qquad a_2 = \sqrt{\frac{25-5\sqrt5}{8}}.
\]
Then
\[
m^2 G(w) = \frac{3\cdot 2w}{w^2-a_1^2} + \frac{3\cdot 2w}{w^2-a_2^2}
= \frac{6w}{w^2-\tfrac{25+5\sqrt5}{8}} + \frac{6w}{w^2-\tfrac{25-5\sqrt5}{8}}.
\]
When combining over a common denominator, using $a_1^2+a_2^2 = \tfrac{25}{4}$,
$a_1^2 a_2^2 = \tfrac{125}{16}$, and $m^2=25$ gives
\[
G(w) = \frac{192w^3-600w}{400w^4-2500w^2+3125}
\]
and then
\[
F(w) = -\frac{G(\sqrt w)}{\sqrt w}
= \frac{600-192w}{400w^2-2500w+3125}.
\]
When expanding about $w=0$, we get that
\[
V_1(5) = 90,\qquad V_2(5) = 810, \qquad V_3(5)=8100,\qquad V_4(5) = 85050.
\]
With $c_5 = 3\cdot 25/4 = 75/4$,
\[
\sum_{n\ge1} V_n(5)\,\tau^n = c_5\tau\cdot 25\cdot F(c_5\tau)
= \frac{90\tau(1-6\tau)}{45\tau^2-15\tau+1}.
\]
Now when expanding in $\tau$ reproduces $V_1(5),\dots,V_4(5)$ above, and the two poles of the denominator
$45\tau^2-15\tau+1=0$ are $\tau = a_1^2/c_5$ and $\tau=a_2^2/c_5$.
\end{example}

\begin{example}[$m = 6$]
\label{ex:m6}
By direct evaluation, we have that $\Lambda_{6}(j) = \frac13\sum_{\ell=1}^3\cos\bigl(2\pi(j_\ell+\tfrac32)/6\bigr)$
over the $20 = (6-1)(6-2)$ triples of $A(6)\setminus A_0(6)$ produces exactly three
classes, namely
\[
r = \pm\frac{3\sqrt3}{2}\ (\text{mult.}\ 1), \qquad r=\pm\sqrt3\ (\text{mult.}\ 6), \qquad
r = \pm\frac{\sqrt3}{2}\ (\text{mult.}\ 3),
\]
with total multiplicity being $1+1+6+6+3+3=20 = (6-1)(6-2)$ as required.  Then
\[
m^2 G(w) = \frac{2w}{w^2-\tfrac{27}{4}} + \frac{12w}{w^2-3} + \frac{6w}{w^2-\tfrac34},
\]
which, combined over a common denominator, using $m^2=36$ becomes
\[
G(w) = \frac{80w^5-624w^3+747w}{144w^6-1512w^4+3969w^2-2187}
\]
and then
\[
F(w) = -\frac{G(x,x;\sqrt w)}{\sqrt w}
= \frac{-80w^2+624w-747}{144w^3-1512w^2+3969w-2187}.
\]
Expanding $F(w)$ about $w=0$ and applying
$V_n(m) = 3^n(m/2)^{2n}\cdot m^2/(n-1)!\cdot \partial_w^{n-1}F(x,x;w)\big|_{w=0}$ term by term gives
\[
V_1(6)=332,\quad V_2(6)=8780,\quad V_3(6)=288812,\quad V_4(6)=10156940.
\]
With $c_6 = 3\cdot 6^2/4 = 27$,
\[
\sum_{n\ge1} V_n(6)\,\tau^n = c_6\tau\cdot 36\cdot F(x,x;c_6\tau)
= \frac{4\tau\bigl(6480\tau^2-1872\tau+83\bigr)}{(1-4\tau)(1-9\tau)(1-36\tau)}.
\]
By expanding in $\tau$
reproduces $V_1(6),\dots,V_4(6)$ as above.
\end{example}

\begin{example}[$m=7$]
\label{ex:m7}
The thirty nonzero eigenvalues fall into four classes.  One with the pair $r=\pm\sqrt7/2$, of
multiplicity $6$, and then together with three pairs $r=\pm\sqrt{x_k}$ ($k=0,1,2$), each of multiplicity
$3$, where $x_0,x_1,x_2$ are the three real roots of
\[
Q(x) := 64x^3 - 560x^2 + 1176x - 343.
\]
Rather than substitute the individual radical values of the $x_k$, we use
the elementary identity
\[
\sum_{k=0}^{2}\frac{1}{w^2-x_k} = \frac{Q'(w^2)}{Q(w^2)},
\]
which involves no radicals at all. Then
\[
m^2 G(w) = \frac{12w}{w^2-\tfrac74} + 6w\cdot\frac{Q'(w^2)}{Q(w^2)}
= \frac{7680w^7-61824w^5+131712w^3-65856w}{256w^8-2688w^6+8624w^4-9604w^2+2401},
\]
with $m^2=49$,
\[
G(w) = \frac{7680w^7-61824w^5+131712w^3-65856w}{12544w^8-131712w^6+422576w^4-470596w^2+117649}
\]
then
\[
F(w) = -\frac{G(x,x;\sqrt w)}{\sqrt w}
= \frac{-7680w^3+61824w^2-131712w+65856}{12544w^4-131712w^3+422576w^2-470596w+117649}.
\]
Expanding $F(x,x;w)$ about $w=0$ and applying
$V_n(7) = 3^n\cdot(7/2)^{2n}\cdot 49/(n-1)!\cdot \partial_w^{n-1}F(w)\big|_{w=0}$ term by term
gives
\[
V_1(7) = 1008, \qquad V_2(7) = 74088, \qquad V_3(7) = 7279146.
\]
With $c_7 = 3\cdot 7^2/4 = 147/4$,
\[
\sum_{n\ge1} V_n(7)\,\tau^n = c_7\tau\cdot 49\cdot F(c_7\tau),
\]
which, expanded in $\tau$, reproduces $V_1(7), V_2(7), V_3(7)$ above.
\end{example}

\section{A resultant using Chebyshev polynomials}
\label{sec:first-construction}

We now give our first construction of $D(w)$ as a resultant, or rather as a factor
of a resultant, which is built directly from
Chebyshev-type polynomials. For $j = 0,\dots,m-1$ let
\[
x_j := -\sin(2\pi j/m),
\]
the real numbers whose sums generate $r_j$. Recall the classical Chebyshev identity
\[
T_m\Bigl(\tfrac12(u+u^{-1})\Bigr) = \tfrac12(u^m+u^{-m})
\,\,\,\,\,
\textrm{\rm for any  $u \neq 0$ and $m \geq 0$.}
\]

\begin{lemma}\label{lem:first_resultant_lemma}
Let $\zeta := e^{2\pi i/m}$ and, for $j=0,\dots,m-1$.  Set $u_j := i\zeta^j$, from which
we have that $x_j = \operatorname{Re}(u_j)$.  Then
\[
T_m(x_j) = \operatorname{Re}(i^m) =
\begin{cases}
0, & m \text{ odd},\\[2pt]
(-1)^{m/2}, & m \text{ even}.
\end{cases}
\]
\end{lemma}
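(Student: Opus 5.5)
The plan is to compute $T_m(x_j)$ directly from the Chebyshev identity $T_m\bigl(\tfrac12(u+u^{-1})\bigr)=\tfrac12(u^m+u^{-m})$, taking $u=u_j=i\zeta^j$. This $u$ lies on the unit circle, so $u^{-1}=\bar u$. Hence $\tfrac12(u_j+u_j^{-1})=\tfrac12(u_j+\bar u_j)=\operatorname{Re}(u_j)$.

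First I check that this real part is $x_j$. We have
\[
u_j=i\bigl(\cos(2\pi j/m)+i\sin(2\pi j/m)\bigr)=-\sin(2\pi j/m)+i\cos(2\pi j/m),
\]
so $\operatorname{Re}(u_j)=-\sin(2\pi j/m)=x_j$, which is the asserted relation $x_j=\operatorname{Re}(u_j)$.

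Next I apply the identity. It gives $T_m(x_j)=\tfrac12(u_j^m+\bar u_j^{\,m})=\operatorname{Re}(u_j^m)$. Since $\zeta^{jm}=1$, we get $u_j^m=i^m\zeta^{jm}=i^m$, and therefore $T_m(x_j)=\operatorname{Re}(i^m)$, independently of $j$.

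Finally I evaluate $\operatorname{Re}(i^m)$ by parity. If $m$ is odd, $i^m=\pm i$ is purely imaginary, so the real part is $0$. If $m$ is even, $i^m=(i^2)^{m/2}=(-1)^{m/2}$.

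Every step is routine. The only point needing care is using $|u_j|=1$ to replace $u_j^{-1}$ by $\bar u_j$, which is what turns the symmetric expression into a real part.
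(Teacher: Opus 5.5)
Your proposal is correct and is the paper's proof: apply the Chebyshev identity at $u=u_j$, use $|u_j|=1$ to identify $\tfrac12(u_j+u_j^{-1})$ with $\operatorname{Re}(u_j)=x_j$, and use $\zeta^{jm}=1$ to obtain $\operatorname{Re}(i^m)$ independently of $j$. Your only additions are the explicit checks that $\operatorname{Re}(u_j)=-\sin(2\pi j/m)$ and of the parity cases, which the paper leaves implicit.
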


\begin{proof}
Since $|u_j|=1$, we have $u_j^{-1}=\overline{u_j}$, so $x_j = \tfrac12(u_j+u_j^{-1})$, and the
identity above gives
\[
T_m(x_j) = \tfrac12\bigl(u_j^m+u_j^{-m}\bigr) = \tfrac12\bigl(i^m\zeta^{jm}+i^{-m}\zeta^{-jm}\bigr)
= \tfrac12\bigl(i^m+i^{-m}\bigr) = \operatorname{Re}(i^m),
\]
from which the lemma follows.
\end{proof}

Define
\[
Q_m(x) :=
\begin{cases}
T_m(x), & m \text{ odd},\\[2pt]
T_m(x) - i^m, & m \text{ even},
\end{cases}
\]
which is a polynomial with integer coefficients, of degree
$m$ and with leading coefficient $c := 2^{m-1}$  for both odd and even $m$. By the Lemma
\ref{lem:first_resultant_lemma}, the $m$
values $x_0,\dots,x_{m-1}$ are roots of $Q_m$. Since $\deg Q_m = m$, these roots, counted with
multiplicity, are all of the roots of $Q_m$.

Define
\[
R(w) := \prod_{j=0}^{m-1}\prod_{k=0}^{m-1} Q_m(w - x_j - x_k).
\]

\begin{theorem}
For every $m \geq 3$, $R(w)$ is expressible as an iterated resultant, namely that
\[
R(w) = c^{-2m^2}\operatorname{Res}_{x_1}\Bigl(Q_m(x_1),\ \operatorname{Res}_{x_2}\bigl(Q_m(x_2),\, Q_m(w-x_1-x_2)\bigr)\Bigr).
\]
Furthermore, $R(w)$ factors as
\[
R(w) = D(w)\cdot w^{3m-2}\cdot \Phi(w),
\]
where $D(w)$ is degree $(m-1)(m-2)$ polynomial defined by \eqref{eq:det_polynomial} and $\Phi$ is another polynomial.
\end{theorem}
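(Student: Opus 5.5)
We split the theorem into two parts: the resultant identity and the factorization. Both follow from the product formula in Lemma \ref{prop:resultant-basic}(ii), applied twice, together with the description of the roots of $Q_m$ from Lemma \ref{lem:first_resultant_lemma}.

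\emph{Step 1 (resultant identity).} We treat $x_1$ and $w$ as parameters and regard $g(x_2):=Q_m(w-x_1-x_2)$ as a polynomial in $x_2$. It has degree $m$ and leading coefficient $(-1)^m c$. Since $Q_m$ has leading coefficient $c$ and roots $x_0,\dots,x_{m-1}$, Lemma \ref{prop:resultant-basic}(ii) gives
\[
\operatorname{Res}_{x_2}\bigl(Q_m(x_2),g\bigr)=c^{m}\prod_{k=0}^{m-1}Q_m(w-x_1-x_k).
\]
The right side is a polynomial $h(x_1)$ in $x_1$. Its leading coefficient is $\pm c^{2m}$, coming from the top term of each of the $m$ factors, so $h$ has degree $m^2$. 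Applying the lemma once more in $x_1$,
\[
\operatorname{Res}_{x_1}\bigl(Q_m,h\bigr)=c^{m^2}\prod_{j}h(x_j)=c^{m^2}c^{m\cdot m}\prod_{j,k}Q_m(w-x_j-x_k)=c^{2m^2}R(w).
\]
This is the claimed identity. The lemma is stated over a field, so we work in the field of fractions of $\mathbb C[w]$, or simply specialize $w$ pointwise and compare polynomials.

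\emph{Step 2 (factorization).} Each factor $Q_m(w-x_j-x_k)$ splits as $c\prod_{l}(w-x_j-x_k-x_l)$, because the roots of $Q_m$ are exactly the $x_l$. Hence
\[
R(w)=c^{m^2}\prod_{(j,k,l)\in(\mathbb Z/m)^3}(w-x_j-x_k-x_l).
\]
The constant $c^{m^2}$ can be absorbed into $\Phi$. Indeed, the normalization $c^{-2m^2}$ in the statement ought to leave $R$ with leading coefficient $c^{m^2}$, and we will double-check whether the intended $R$ is monic; if it is not, we put the constant into $\Phi$.

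We then partition $(\mathbb Z/m)^3$ into the set $A(m)$ of triples with $j+k+l\equiv0$ and its complement.

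\begin{itemize}
\item For $(j,k,l)\in A(m)$, write $(j,k,l)=i_m(a,b)$. Then $x_j+x_k+x_l=-\sin(2\pi a/m)-\sin(2\pi b/m)+\sin(2\pi(a+b)/m)=r_{a,b}$.
\item The subset $A_0(m)$ has $3m-2$ elements, and on it $r_{a,b}=0$. This holds because if any index is $0\bmod m$, the remaining two sines cancel: for example, $a=0$ gives $-\sin(2\pi b/m)+\sin(2\pi b/m)=0$.
\item The elements of $A(m)\setminus A_0(m)$ correspond bijectively to $G_m\setminus Z$. Their product is therefore exactly $D(w)$ from \eqref{eq:det_polynomial}.
\end{itemize}

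So $\prod_{A(m)}(w-x_j-x_k-x_l)=w^{3m-2}D(w)$. We set $\Phi(w):=c^{m^2}\prod_{\text{complement}}(w-x_j-x_k-x_l)$. It is a polynomial of degree $m^3-m^2$, which gives the factorization.

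\emph{Main obstacle.} The main points needing care are in Step 1. First, we must track the leading coefficient and degree of the inner resultant as a polynomial in $x_1$, so that Lemma \ref{prop:resultant-basic}(ii) applies with the correct power of $c$; a degree drop would break it. Second, we must check the sign $(-1)^m$ from $g$'s leading coefficient. For the outer step this sign is harmless, since the roots-of-$f$ form of the lemma uses only $f$'s leading coefficient. Beyond that, the only subtlety is the normalizing constant, which we handle by absorbing it into $\Phi$ as above.
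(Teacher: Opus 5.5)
Your argument follows the paper's proof essentially step for step. You apply the product formula twice to get $c^{2m^2}R(w)$, split each $Q_m(w-x_j-x_k)$ over the roots of $Q_m$, and partition $(\mathbb Z/m\mathbb Z)^3$ into $A(m)\setminus A_0(m)$, $A_0(m)$ and the complement, with $c^{m^2}$ absorbed into $\Phi$.

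There is one real gap, which the paper also glosses. It is not the leading-coefficient bookkeeping you flag, which is harmless. It is the claim that $x_0,\dots,x_{m-1}$ are the roots of $Q_m$ \emph{counted with multiplicity}. Both applications of Lemma~\ref{prop:resultant-basic}(ii) and your Step 2 factorization rely on this. Lemma~\ref{lem:first_resultant_lemma} only shows that each $x_l$ is a root. For odd $m$ the $x_l$ are distinct, so that suffices. For even $m$ they are not distinct: $x_l=x_{m/2-l}$, for example $x_0=x_{m/2}=0$. Knowing that $m$ values with repetitions are roots of a degree-$m$ polynomial does not determine the multiplicities.

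A direct fix uses $Q_m=T_m-\operatorname{Re}(i^m)$ (valid in both parities), $x=\tfrac12(u+u^{-1})$, and $z_l=i\zeta^l$:
\[
2Q_m(x)=u^m+u^{-m}-i^m-i^{-m}=u^{-m}(u^m-i^m)(u^m-i^{-m})=u^{-m}\prod_{l=0}^{m-1}(u-z_l)(u-z_l^{-1}).
\]
The last equality holds because the $z_l$ (resp.\ $z_l^{-1}$) are the $m$ distinct roots of $u^m-i^m$ (resp.\ $u^m-i^{-m}$). Since $|z_l|=1$, we have $(u-z_l)(u-z_l^{-1})=2u\,(x-x_l)$. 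Hence $Q_m(x)=2^{m-1}\prod_{l}(x-x_l)$ for every $m\ge 3$. With this identity in place, the rest of your proof goes through as written.
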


\begin{remark}
The factor
$w^{3m-2}$ accounts for the $3m-2$ triples in $A(m)$ with at least one zero component, and $\Phi(w)$
is an explicit polynomial of degree $m^2(m-1)$, coming from index pairs $(j,k)$ whose associated
third index $j_3$  which does not yield a triple summing to $0 \bmod m$.
\end{remark}

\begin{proof}
Since $Q_m(x_2)$ has degree $m$ and leading coefficient $c$, the product
formula for resultants gives $\operatorname{Res}_{x_2}(Q_m(x_2), Q_m(w-x_1-x_2)) = c^m\prod_{k=0}^{m-1} Q_m(w-x_1-x_k)$,
a polynomial in $x_1$ of degree $m^2$. Applying the resultant formula once more, now for
$Q_m(x_1)$, we get that
\begin{align*}
\operatorname{Res}_{x_1}\Bigl(Q_m(x_1),\, c^m\prod_k Q_m(w-x_1-x_k)\Bigr)
&= c^{m^2}\prod_{j=0}^{m-1}\Bigl[c^m\prod_{k=0}^{m-1} Q_m(w-x_j-x_k)\Bigr]
\\&= c^{2m^2}\prod_{j,k} Q_m(w-x_j-x_k) = c^{2m^2} R(w),
\end{align*}
giving the stated formula after dividing by $c^{2m^2}$.

From the properties of resultants, $Q_m(w-x_j-x_k) = 0$ if and only if $w - x_j - x_k = x_{\ell}$ for some $\ell \in \{0,\dots,m-1\}$.
By Lemma \ref{lem:first_resultant_lemma}, the roots of $Q_m$ are exactly the $x_\ell$. Therefore,
\begin{align*}
R(w)&=c^{m^2}\prod_{j=(j_1,j_2,j_3)\in\{0,...,m-1\}^3}(w-x_{j_1}-x_{j_2}-x_{j_3})\\
&=\prod_{j\in A(m)\setminus A_0(m)}(w-x_{j_1}-x_{j_2}-x_{j_3})\prod_{j\in A_0(m)}(w-x_{j_1}-x_{j_2}-x_{j_3})\cdot\\
&\cdot c^{m^2}\prod_{j\in\{0,...,m-1\}^3\setminus A(m)}(w-x_{j_1}-x_{j_2}-x_{j_3})\\
&=D(w)\cdot w^{3m-2}\cdot \Phi(w).
\end{align*}
\end{proof}

\begin{remark}
Theorem 4.2. expresses $D(w)$, exactly, as a factor of an explicit resultant, with an explicit
overall constant and a complementary factor $F(w)$.
What it does not give us is a way to extract $D(w)$ alone without already knowing enough about the
index set $A(m)$ to separate the two factors.
This is exactly the sense in which the resultant is ``too big''.  It
ranges, unavoidably, over the full $m^2$ pairs of roots of $Q_m$ with no built-in mechanism for
restricting to the constrained triples defining $A(m)$. Below we develop a second construction that
resolves this issue, by working with another variable  that does encode the
constraint directly,  and, unlike the present construction, requires no case distinction on the
parity of $m$ at all.
\end{remark}

\begin{example}[$m = 3$]
From Example \ref{ex:m3} we know that $D(w) = \dfrac{4w^2-27}{4}$.  Indeed, when computing $R(w)$, which has
degree $27$ and dividing by
$w^7 D(w)$ leaves zero remainder and quotient
\[
\Phi(w) = 2^6\,(w^2-3)^3(4w^2-3)^6,
\]
of degree $18 = 3^2(3-1)$ exactly as predicted.
\end{example}


\begin{example}[$m = 5$]
From Example \ref{ex:m5} $D(w) = \dfrac{(16w^4-100w^2+125)^3}{4096}$.  In this case, $R(w)$  has degree $125$.
When dividing by $w^{13}D(w)$ we get the quotient
\[
\Phi(w) = 2^{12}\bigl(w^4-5w^2+5\bigr)^3\bigl(16w^4-180w^2+405\bigr)\bigl(16w^4-100w^2+5\bigr)^3
\]
\[
\times\ \bigl(16w^4-40w^2+5\bigr)^6\bigl(16w^4-20w^2+5\bigr)^{12},
\]
which has degree $100 = 5^2(5-1)$.
\end{example}

\begin{example}[$m = 6$]
Since $m$ is even, $Q_6(x) = T_6(x) + 1 = 32x^6-48x^4+18x^2+1$. From Example \ref{ex:m6}
$D(w) = \bigl(w^2-\tfrac{27}{4}\bigr)(w^2-3)^6\bigl(w^2-\tfrac34\bigr)^3$. The polynomial
$R(w)$ has  degree $216$, and its quotient by $w^{16}D(w)$ is
\[
\Phi(w) = 2^{76}\,w^{40}(w^2-3)^{18}(4w^2-27)^7(4w^2-3)^{45},
\]
which has degree $180 = 6^2(6-1)$.
\end{example}

\begin{example}[$m = 7$]
From Example \ref{ex:m7},
\[
D(w) = \frac{(4w^2-7)^6(64w^6-560w^4+1176w^2-343)^3}{2^{30}},
\]
of degree $30$. The polynomial $R(w)$ has degree $343$, and the quotient by $w^{19}D(w)$ is
\begin{align*}
\Phi(w) = {} & 2^{18}\bigl(w^6-7w^4+14w^2-7\bigr)^3\bigl(64w^6-1008w^4+4536w^2-5103\bigr) \\
& \times\bigl(64w^6-560w^4+616w^2-7\bigr)^3\bigl(64w^6-560w^4+952w^2-7\bigr)^3
\bigl(64w^6-560w^4+1512w^2-1183\bigr)^3 \\
& \times\bigl(64w^6-336w^4+140w^2-7\bigr)^6\bigl(64w^6-224w^4+84w^2-7\bigr)^6
\bigl(64w^6-224w^4+196w^2-7\bigr)^6 \\
& \times\bigl(64w^6-112w^4+56w^2-7\bigr)^{18},
\end{align*}
which has degree $294 = 7^2(7-1)$.
\end{example}

\section{An efficient resultant construction}\label{sec:second-construction}

We now give a refined resultant construction that avoids the factor
$\Phi(w)$, instead yielding the function $D(w)$, $w^{3m-2}$, and
a multiplicative constant.

\begin{remark}\rm
\label{rem:real_variable_limit}
It is natural to ask whether the construction of
Section~\ref{sec:first-construction} can be refined by imposing directly the
angle-sum constraint on the real variables $
 x_\ell=\cos\widetilde\theta_\ell,
 \qquad
 \widetilde\theta_\ell:=\frac{2\pi(j_\ell+m/4)}{m}.
$
For a constrained triple $j_1+j_2+j_3\equiv0\mod m$, we have
$
 \widetilde\theta_1+\widetilde\theta_2+\widetilde\theta_3
 \equiv \frac{3\pi}{2}\pmod{2\pi}.
$
Hence, after writing $s_\ell=\sin\widetilde\theta_\ell$, one obtains
\[
 x_3
 =\cos\bigl(\tfrac{3\pi}{2}-\widetilde\theta_1-
 \widetilde\theta_2\bigr)
 =-\sin(\widetilde\theta_1+\widetilde\theta_2)
 =-(s_1x_2+x_1s_2).
\]
Eliminating $s_1$ and $s_2$ by using $s_\ell^2=1-x_\ell^2$ gives the
polynomial relation
\begin{equation}
\label{eq:Pconstraint}
 P(x_1,x_2,x_3):={\bigl(x_3^2-x_1^2-x_2^2+2x_1^2x_2^2\bigr)^2}
   -4x_1^2x_2^2(1-x_1^2)(1-x_2^2)=0.
\end{equation}
Relation~\eqref{eq:Pconstraint} is a necessary polynomial relation among
the three real parts, but it is not sufficient to recover the original
angle-sum constraint.  Indeed, the squaring used to eliminate $s_1$ and
$s_2$ loses their signs.  Equivalently, the map
$z\mapsto\operatorname{Re}(z)$ on the unit circle identifies $z$ with
$\overline z=z^{-1}$.  Thus, from $x_1$ and $x_2$ alone one cannot select
the particular phases needed to determine the unique completion
$ z_{j_3}=-\frac{i}{z_{j_1}z_{j_2}},$  $j_3\equiv-(j_1+j_2)\pmod m.$
Consequently, eliminating $x_1,x_2,x_3$ using
\eqref{eq:Pconstraint} together with the one-variable equations satisfied
by the $x_j$ generally introduces extraneous branches and does not isolate
the triples in $A(m)$.

This is why, in the next subsection, we pass to the unit-circle variables
$u$ and $v$.
\end{remark}

\subsection{The second resultant}

Let $\zeta:=e^{2\pi i/m}$.  For $j\in\mathbb Z/m\mathbb Z$, set
$z_j:=i\zeta^j$.

\begin{lemma}
\label{lem:root-param}
The map $j\mapsto z_j$ is a bijection of $\mathbb Z/m\mathbb Z$ onto the
set of roots of $Q(u):=u^m-i^m$. Each $z_j$ lies on the unit circle,
and $\mathrm{Re}(z_j)=-\sin(2\pi j/m)=x_j$.
\end{lemma}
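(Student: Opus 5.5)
The plan is to check the three assertions of Lemma \ref{lem:root-param} in turn: the $z_j$ are roots of $Q$, they are distinct, and they have the stated modulus and real part. All three follow from $z_j=i\zeta^j$ by direct computation.

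First, each $z_j$ is a root of $Q$. Since $\zeta^m=1$,
\[
z_j^m=i^m\zeta^{jm}=i^m,
\]
so $Q(z_j)=0$. The map $j\mapsto z_j$ is also well defined on $\mathbb Z/m\mathbb Z$, because $\zeta^{j+m}=\zeta^j$.

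Second, the map is injective. If $z_j=z_k$, then $\zeta^{j-k}=1$. Since $\zeta$ is a primitive $m$-th root of unity, this forces $j\equiv k\pmod m$.

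Third, the map is onto the roots of $Q$. We now have $m$ distinct roots of the monic polynomial $Q$, which has degree $m$. A nonzero polynomial over $\mathbb C$ has at most $\deg Q$ roots, so these are all the roots. Hence $j\mapsto z_j$ is a bijection onto the root set, and every root of $Q$ is simple.

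Finally, modulus and real part. We have $|z_j|=|i|\,|\zeta|^j=1$, so each $z_j$ lies on the unit circle. Writing
\[
z_j=i\bigl(\cos(2\pi j/m)+i\sin(2\pi j/m)\bigr)=-\sin(2\pi j/m)+i\cos(2\pi j/m),
\]
we read off $\operatorname{Re}(z_j)=-\sin(2\pi j/m)=x_j$. This agrees with Lemma \ref{lem:first_resultant_lemma}, where $u_j=z_j$.

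There is no real obstacle here. The only point needing care is the counting step, where distinctness of the $z_j$ together with $\deg Q=m$ gives surjectivity.
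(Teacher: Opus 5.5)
Your proof is correct and follows essentially the same route as the paper: you verify $z_j^m=i^m$, then use distinctness of the $\zeta^j$ together with $\deg Q=m$ to get the bijection, and finally read off the modulus and real part. The paper writes $z_j=e^{i\theta_j}$ with $\theta_j=\pi/2+2\pi j/m$, which is the same computation as your expansion, and your remarks on well-definedness modulo $m$ and simplicity of the roots are harmless additions.
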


\begin{proof} Clearly,
$(i\zeta^j)^m=i^m\zeta^{jm}=i^m$ since $\zeta^m=1$.  The $m$ values
$\zeta^j$ are distinct, so the $z_j$ are distinct.  Because $Q$ has
exactly $m$ roots, the map is onto.
By writing $\theta_j:=\pi/2+2\pi j/m$, we have $z_j=e^{i\theta_j}$, $|z_j|=1$,  and
$\mathrm{Re}(z_j)=\cos\theta_j=\cos(\pi/2+2\pi j/m)=-\sin(2\pi j/m)=x_j$.
\end{proof}

\begin{lemma}
\label{lem:constraint-product}
For $j_1,j_2,j_3\in\mathbb Z/m\mathbb Z$,
\[
j_1+j_2+j_3\equiv0\pmod m
\,\,\,\,\,
\text{\rm if and only if}
\,\,\,\,\,
 z_{j_1}z_{j_2}z_{j_3}=-i.
\]
Thus, for any two roots $u=z_{j_1},\,v=z_{j_2}$ of $Q(x)=x^m-i^m$,
the value $z_3:=-i/(uv)$ is automatically a root of $Q$ and is equal to
$z_{j_3}$.
\end{lemma}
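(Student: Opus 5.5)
The plan is a direct computation with the explicit parametrization $z_j=i\zeta^j$ from Lemma \ref{lem:root-param}, followed by a short counting argument for the second assertion. No earlier result is needed beyond the fact that the $z_j$ are exactly the roots of $Q$ and that $\zeta$ is a primitive $m$-th root of unity.

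For the equivalence, first compute the product. For any $j_1,j_2,j_3\in\mathbb Z/m\mathbb Z$,
\[
z_{j_1}z_{j_2}z_{j_3}=i^3\zeta^{j_1+j_2+j_3}=-i\,\zeta^{j_1+j_2+j_3}.
\]
This expression is well defined on residues because $\zeta^m=1$. So $z_{j_1}z_{j_2}z_{j_3}=-i$ holds if and only if $\zeta^{j_1+j_2+j_3}=1$. Since $\zeta=e^{2\pi i/m}$ has order exactly $m$, this is equivalent to $j_1+j_2+j_3\equiv 0\pmod m$. Both directions are contained in this single chain of equivalences.

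For the second assertion, let $u=z_{j_1}$ and $v=z_{j_2}$ be roots of $Q$; by Lemma \ref{lem:root-param} every root of $Q$ has this form. Put $j_3:=-(j_1+j_2)\bmod m$. Then $j_1+j_2+j_3\equiv0$, so the first part gives $z_{j_1}z_{j_2}z_{j_3}=-i$. Since $uv\neq0$, we can solve for the third factor:
\[
z_3=-i/(uv)=z_{j_3}.
\]
In particular $z_3$ is a root of $Q$, again by Lemma \ref{lem:root-param}. Uniqueness of $j_3$ follows from the injectivity of $j\mapsto z_j$.

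There is no genuine obstacle here. The only point needing care is to state the order of $\zeta$ explicitly, because the converse direction rests on it.
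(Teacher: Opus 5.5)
Your proof is correct and follows the paper's argument: both use the computation $z_{j_1}z_{j_2}z_{j_3}=-i\,\zeta^{j_1+j_2+j_3}$ together with the fact that $\zeta$ has order exactly $m$. For the second assertion you obtain $z_3=z_{j_3}$ by applying the first part and dividing by $uv$, whereas the paper computes $-i/(uv)=i\zeta^{-(j_1+j_2)}$ directly; the difference is immaterial.
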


\begin{proof} Clearly,
$z_{j_1}z_{j_2}z_{j_3}=i^3\zeta^{j_1+j_2+j_3}=-i\cdot\zeta^{j_1+j_2+j_3}$,
which equals $-i$ precisely when $\zeta^{j_1+j_2+j_3}=1$, or equivalently when
$j_1+j_2+j_3\equiv0\pmod m$. For the second statement,  $uv=-\zeta^{j_1+j_2}$,
and
$z_3=-i/(uv)=-i/(-\zeta^{j_1+j_2})=i\zeta^{-(j_1+j_2)}=z_{-(j_1+j_2)\bmod m}=z_{j_3}$.
\end{proof}

Let us now define the auxiliary polynomial
\begin{equation}
\label{eq:Wdef}
W(u,v,w) := iu^2v^2+u^2v+uv^2-2wuv+u+v-i.
\end{equation}

\begin{lemma}\label{lemma W}
Let $u, v$ lie on the unit circle and set $z_3 := -i/(uv)$. Then
\[
W(u,v,w) = -2uv\Bigl(w - \bigl(\operatorname{Re}(u)+\operatorname{Re}(v)+\operatorname{Re}(z_3)\bigr)\Bigr).
\]
In particular, for $u = z_{j_1}$, $v = z_{j_2}$, $j_3 = -(j_1+j_2) \bmod m$, we have that
\[
W(z_{j_1}, z_{j_2}, w) = -2z_{j_1}z_{j_2}\bigl(w - 3\Lambda_{m}(j_1,j_2,j_3)\bigr).
\]
\end{lemma}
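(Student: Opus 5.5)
The plan is a direct algebraic verification using $|u|=|v|=1$, so that $\bar u=u^{-1}$, $\bar v=v^{-1}$, and hence $\operatorname{Re}(u)=\tfrac12(u+u^{-1})$ and $\operatorname{Re}(v)=\tfrac12(v+v^{-1})$. For $z_3=-i/(uv)$ we also have $|z_3|=1$ and $z_3^{-1}=iuv$. This gives
\[
\operatorname{Re}(z_3)=\tfrac12\Bigl(-\frac{i}{uv}+iuv\Bigr).
\]
Adding the three real parts, the sum $S:=\operatorname{Re}(u)+\operatorname{Re}(v)+\operatorname{Re}(z_3)$ is a Laurent polynomial in $u,v$.

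Next I multiply $w-S$ by $-2uv$ and expand term by term:
\[
-2uv\,w+uv(u+u^{-1})+uv(v+v^{-1})+uv\Bigl(-\frac{i}{uv}+iuv\Bigr).
\]
This equals
\[
-2wuv+u^2v+v+uv^2+u-i+iu^2v^2,
\]
which is exactly $W(u,v,w)$ from \eqref{eq:Wdef}. That proves the first identity. The only point to watch is the sign and conjugation bookkeeping for $z_3$: one must check that $\overline{z_3}=1/z_3=iuv$, which holds because $|z_3|=1$.

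For the second statement I take $u=z_{j_1}$ and $v=z_{j_2}$, which lie on the unit circle by Lemma \ref{lem:root-param}. By Lemma \ref{lem:constraint-product}, $z_3=-i/(uv)=z_{j_3}$ with $j_3\equiv-(j_1+j_2)$. By Lemma \ref{lem:root-param} again, $\operatorname{Re}(z_{j_\ell})=x_{j_\ell}=-\sin(2\pi j_\ell/m)$.

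It remains to identify $\sum_\ell x_{j_\ell}$ with $3\Lambda_m(j_1,j_2,j_3)$. From \eqref{eq:beta-Lambda}, $\cos\bigl(2\pi(j_\ell+m/4)/m\bigr)=\cos(2\pi j_\ell/m+\pi/2)=-\sin(2\pi j_\ell/m)$, so $3\Lambda_m(j)=\sum_\ell x_{j_\ell}$. This step needs care because $\Lambda_m$ is defined through representatives $j_\ell$; the expression is well defined modulo $m$ since shifting $j_\ell$ by $m$ changes the angle by $2\pi$. Substituting into the first identity gives the claim. I expect no real obstacle beyond this routine bookkeeping.
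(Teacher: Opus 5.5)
Your proposal is correct and takes essentially the same route as the paper: use $|z|=1$ to write each real part as $\tfrac12(z+z^{-1})$, substitute $z_3^{-1}=iuv$, and multiply by $2uv$ to recover $W(u,v,0)$; then specialize to $u=z_{j_1}$, $v=z_{j_2}$ via Lemmas \ref{lem:root-param} and \ref{lem:constraint-product}. Your explicit check that $\cos(2\pi j_\ell/m+\pi/2)=-\sin(2\pi j_\ell/m)$, which gives $3\Lambda_m(j)=\sum_\ell x_{j_\ell}$, supplies a step the paper leaves implicit.
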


\begin{proof}
Since $|u|=|v|=|z_3|=1$, we have $\bar u = 1/u$, and similarly for $v$ and $z_{3}$.  Then,
$$
2\bigl(\operatorname{Re}(u)+\operatorname{Re}(v)+\operatorname{Re}(z_3)\bigr) = (u+1/u)+(v+1/v)+(z_3+1/z_3).
$$
By substituting $z_3 = -i/(uv)$, so $1/z_3 = iuv$, and multiplying through by $uv$, we get that
\[
2uv\bigl(\operatorname{Re}(u)+\operatorname{Re}(v)+\operatorname{Re}(z_3)\bigr)
= u^2v+v+uv^2+u-i+iu^2v^2 = W(u,v,0).
\]
Since the only $w$-dependence in $W$ is the term $-2wuv$, we get
\[
W(u,v,w) = W(u,v,0) - 2wuv = -2uv\Bigl(w - \bigl(\operatorname{Re}(u)+\operatorname{Re}(v)+\operatorname{Re}(z_3)\bigr)\Bigr).
\]
From Lemma \ref{lem:root-param} and Lemma \ref{lem:constraint-product}, we have that
$$
\operatorname{Re}(z_{j_1})+\operatorname{Re}(z_{j_2})+\operatorname{Re}(z_{j_3}) = x_{j_1}+x_{j_2}+x_{j_3} = 3\Lambda_{m}(j_1,j_2,j_3),
$$
which completes the proof.
\end{proof}

\begin{theorem}
\label{thm:second-resultant}
Let $Q(u):=u^m-i^m$, which is monic of degree $m$, and $W$ as in \eqref{eq:Wdef}.
Define
\begin{equation}\label{eq:resultant_product}
\widetilde D(w) := \prod_{j_1=0}^{m-1}\prod_{j_2=0}^{m-1}W(z_{j_1},z_{j_2},w).
\end{equation}
Then
$$
\widetilde D(w) =
\Res_u\big(Q(u),\Res_v(Q(v),W(u,v,w))\big)
$$
and
\[
\widetilde D(w) = 2^{m^2}w^{3m-2}D(w),
\]
where $D(w)$ is the spectral determinant polynomial defined by \eqref{eq:det_polynomial}.
\end{theorem}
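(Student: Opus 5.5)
The proof has two parts. First we identify $\widetilde D$ with the iterated resultant, using the product formula of Lemma \ref{prop:resultant-basic}(ii). Then we evaluate the product via Lemma \ref{lemma W}.

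For the first identity, fix $u$ and regard $W(u,v,w)$ as a polynomial in $v$. Since $Q(v)=v^m-i^m$ is monic with roots $z_0,\dots,z_{m-1}$ (Lemma \ref{lem:root-param}), the product formula in the form $\Res(f,g)=\prod g(\alpha_i)$ for monic $f$ gives
\[
\Res_v\bigl(Q(v),W(u,v,w)\bigr)=\prod_{j_2=0}^{m-1}W(u,z_{j_2},w).
\]
This is a polynomial in $u$ (and $w$). Here we should remark that the $v$-degree of $W$ is $2$, with leading coefficient $iu^2+u$, which may vanish for special $u$. This causes no difficulty, because the formula $\Res(f,g)=f_m^{\deg g}\prod g(\alpha_i)$ with $f=Q$ monic holds as a polynomial identity in the coefficients of $g$, whatever the formal degree used for $g$. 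Applying the same formula once more with the monic $Q(u)$ gives
\[
\Res_u\Bigl(Q(u),\prod_{j_2}W(u,z_{j_2},w)\Bigr)=\prod_{j_1}\prod_{j_2}W(z_{j_1},z_{j_2},w)=\widetilde D(w).
\]
This is the same argument as in the proof of the Chebyshev-resultant theorem of Section \ref{sec:first-construction}, but no leading-coefficient constants appear.

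For the second identity, apply Lemma \ref{lemma W} to each factor, with $j_3=-(j_1+j_2)$:
\[
\widetilde D(w)=\prod_{j_1,j_2}(-2z_{j_1}z_{j_2})\cdot\prod_{j_1,j_2}\bigl(w-3\Lambda_m(j_1,j_2,j_3)\bigr).
\]
The pairs $(j_1,j_2)$ correspond bijectively to $A(m)$ via $i_m$. We split $A(m)=A_0(m)\sqcup(A(m)\setminus A_0(m))$, where $A_0(m)$ is read as the triples with a zero coordinate, as the count $3m-2$ indicates. On $A_0(m)$ we have $3\Lambda_m=0$ by \eqref{eq. 3Lambda explicit}, since a sine factor vanishes; this gives $w^{3m-2}$. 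On the complement, $3\Lambda_m(i_m(a,b))=r_{a,b}$, which gives $\prod_{j\notin Z}(w-r_j)=D(w)$ by \eqref{eq:det_polynomial}.

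It remains to evaluate the constant $\prod_{j_1,j_2}(-2z_{j_1}z_{j_2})=2^{m^2}(-1)^{m^2}\bigl(\prod_j z_j\bigr)^{2m}$. Here $\prod_j z_j=(-1)^{m+1}\cdot(-i^m)=(-1)^m i^m$, being $(-1)^m$ times the constant term of $Q$. Hence $\bigl(\prod_j z_j\bigr)^{2m}=i^{2m^2}=(-1)^{m^2}$, and the total sign is $(-1)^{2m^2}=1$, giving the factor $2^{m^2}$. This sign bookkeeping, together with the leading-coefficient remark in the first step, is the only delicate point; the rest is direct substitution.
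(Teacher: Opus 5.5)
Your proposal is correct and follows the paper's proof: you apply Lemma \ref{lemma W} factor by factor, evaluate the constant $\prod_{j_1,j_2}(-2z_{j_1}z_{j_2})=(-2)^{m^2}\bigl(\prod_j z_j\bigr)^{2m}=2^{m^2}$, and use the bijection onto $A(m)$ to split off $w^{3m-2}$. You also derive the resultant identity explicitly from the product formula for the monic $Q$, a step the paper's proof leaves implicit, and your formal-degree remark is genuinely needed because $iu^2+u$ vanishes at $u=z_0=i$. One small slip: $\prod_j z_j=(-1)^m\cdot(-i^m)=(-1)^{m+1}i^m$, not $(-1)^m i^m$; this is harmless, since the product is raised to the even power $2m$.
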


\begin{proof}
Let us write $j_3=j_3(j_1,j_2)$ viewing $j_{3}$ as a function of $j_{1}, j_{2}$ when $j_{3}:=-(j_1+j_2)\bmod m$. From Lemma \ref{lemma W} we have that
\begin{align*}
\widetilde D(w) &= \prod_{(j_1,j_2)\in\{0,...,m-1\}^2}\Big({-}2z_{j_1}z_{j_2}(w-3\Lambda_{m}(j_1,j_2,j_3))\Big)
\\&= (-2)^{m^2}\Big(\prod_{(j_1,j_2)\in\{0,...,m-1\}^2}z_{j_1}z_{j_2}\Big)\Big(\prod_{(j_1,j_2)\in\{0,...,m-1\}^2}(w-3\Lambda_{m}(j_1,j_2,j_3))\Big).
\end{align*}
There are two points to address.  First, the reduction of the product from the set of all $j_{1}, j_{2}$, and second
the evaluation of the multiplication constant.

Let $P_0:=\prod_{j=0}^{m-1}z_j$ and note that
$$
P_{0}=\prod_{j=0}^{m-1}z_j = i^m\zeta^{0+1+\dots+(m-1)}=i^m\zeta^{m(m-1)/2}.
$$
Since $\zeta^{m(m-1)/2}=e^{i\pi(m-1)}=(-1)^{m-1}$, we get
$P_0=i^m(-1)^{m-1}$. Then
$$
\prod_{(j_1,j_2)\in\{0,...,m-1\}^2}z_{j_1}z_{j_2}=P_0^{2m}=i^{2m^2}=(-1)^{m^2}.
$$
With this, we get that
$$
(-2)^{m^2}\Big(\prod_{(j_1,j_2)\in\{0,...,m-1\}^2}z_{j_1}z_{j_2}\Big)=2^{m^{2}}.
$$

As $(j_1,j_2)$ ranges the distinct $m^2$
pairs in $(\mathbb Z/m\mathbb Z)^2$,
$(j_1,j_2)\mapsto(j_1,j_2,j_3(j_1,j_2))$ is a bijection onto $A(m)$. Every pair $(j_1,j_2)$
produces a unique triple in $A(m)$, by
Lemma~\ref{lem:constraint-product}.  Therefore,
\begin{align*}
\prod_{(j_1,j_2)\in\{0,...,m-1\}^2}(w-3\Lambda_{m}(j_1,j_2,j_3)) &=
\prod_{j\in A(m)}(w-3\Lambda_{m}(j)) \\&=
\prod_{j\in A_0(m)} (w-3 \Lambda(j))\times
\prod_{j\in A(m)\setminus A_0(m)}(w-3\Lambda_{m}(j))
\end{align*}
Each $j \in A_0(m)$ is such that $\Lambda_{m}(j)=0$,
and $A_0(m)$ has $3m-2$ elements.
Therefore,
$$
\prod_{j_1,j_2}(w-3\Lambda_{m}(j_1,j_2,j_3))
= w^{3m-2}\prod_{j\in A(m)\setminus A_0(m)}(w-3\Lambda_{m}(j))
= w^{3m-2}D(w).
$$
\end{proof}

\begin{remark}
\label{rem:D-as-resultant} We have completed the task of writing the spectral determinant polynomial as an explicitly computable elementary function.  Namely,
for every $m\ge3$ and every $w$ with $w\ne0$, we have that
\[
D(w) = \frac{1}{2^{m^2}w^{3m-2}}\Res_u\Big(Q(u),\,\Res_v\big(Q(v),\,W(u,v,w)\big)\Big)
\]
where $Q(u):=u^m-i^m$ with $W$ as in \eqref{eq:Wdef}.  From our point of view, this expression is
the $\textrm{\rm SU}(3)$ Verlinde sum analogue of the Chebyshev polynomial $T_{m}$. Also, because
$m \geq 3$, one can group the factors in \eqref{eq:resultant_product} in conjugate pairs. Therefore, polynomial\eqref{eq:resultant_product} has real coefficients.  Furthermore, the Galois invariance of
the root multiset $\{3\Lambda_{m}(j): j\in A(m)\}$ when considering the Galois group of the $m$-th cyclotomic field implies that \eqref{eq:resultant_product}
has rational coefficients.
\end{remark}

\subsection{Examples}

Recall that $\deg D=(m-1)(m-2)$, $|A_0(m)|=3m-2$, and
$\deg\widetilde D=(3m-2)+(m-1)(m-2)=m^2$.

\begin{example}[$m=3$]
$D(w)=\tfrac{4w^2-27}{4}$, and
$\widetilde D(w)=2^9w^7D(w)=512w^7\cdot\tfrac{4w^2-27}{4}=128w^7(4w^2-27)=512w^9-3456w^7$.
\end{example}


\begin{example}[$m=5$]
$D(w)=\tfrac{(16w^4-100w^2+125)^3}{4096}$, and
$\widetilde D(w)=2^{25}w^{13}D(w)=8192\,w^{13}(16w^4-100w^2+125)^3$.
\end{example}

\begin{example}[$m = 6$]
$D(w) = \bigl(w^2-\tfrac{27}{4}\bigr)(w^2-3)^6\bigl(w^2-\tfrac34\bigr)^3$, so
\[
\widetilde D(w) = 2^{36} w^{16} D(w)
= 2^{36}\, w^{16}\Bigl(w^2-\tfrac{27}{4}\Bigr)(w^2-3)^6\Bigl(w^2-\tfrac34\Bigr)^3,
\]
of degree $36 = m^2$, as required.
\end{example}

\begin{example}[$m = 7$]
$D(w) = \dfrac{(4w^2-7)^6(64w^6-560w^4+1176w^2-343)^3}{2^{30}}$, so
\[
\widetilde D(w) = 2^{49} w^{19} D(w) = 2^{19}\, w^{19}\,(4w^2-7)^6\,(64w^6-560w^4+1176w^2-343)^3,
\]
of degree $49 = m^2$, as required.
\end{example}


\section{Completing the proof of Theorem \ref{thm:mainb} and recursion formulas}
\label{sec:generating-series}

We now complete the proof of Theorem \ref{thm:mainb} by discussing the recursion formulas
that follow from the evaluation of the resolvent as a rational function.

\subsection{Proof of the theorem}

Before we begin with the proof, note that the right-hand side of \eqref{eq:generarating_series} is, indeed, a power series in
$\tau$ because $D$ is even, so $D'/D$ is odd.  Hence,
$v\mapsto vD'(v)/D(v)$ an even function of $v$.

\begin{proof}
For notational convenience, set
$$
S_{2n}(m):=\sum_{j\in A(m)\setminus A_0(m)}(3\Lambda_{m}(j))^{-2n}
$$
and recall that $D(v)=\prod_{j\in A(m)\setminus A_0(m)}(v-r_j)$, $r_j:=3\Lambda_{m}(j)$. Then
\[
v\frac{D'(v)}{D(v)} = v\sum_j\frac{1}{v-r_j} = \sum_j\Big(1+\frac{r_j}{v-r_j}\Big) = (m-1)(m-2) + \sum_j\frac{r_j}{v-r_j},
\]
using $\deg D=(m-1)(m-2)$. Hence
\[
(m-1)(m-2) - v\frac{D'(v)}{D(v)} = -\sum_j\frac{r_j}{v-r_j} = \sum_j\frac{1}{1-v/r_j} = \sum_j\sum_{k\ge0}\Big(\frac vr_j\Big)^k = \sum_{k\ge0}v^k\sum_jr_j^{-k},
\]
valid as a formal power series in $v$ for $|v|$ smaller than every
$|r_j|$. Since, as shown in the previous proof, the eigenvalues $r_j$ occur in $\pm$ pairs, we have $\sum_jr_j^{-k}=0$ for every odd
$k$, leaving only the even $k$
terms, which are exactly the values $S_{2n}(m)$ at $k=2n$.

Note that by definition
$V_n(m)=3^n(m/2)^{2n}S_{2n}(m)$, so setting $v:=\sqrt{c_m\tau}$ hence
$v^{2n}=c_m^n\tau^n=3^n(m/2)^{2n}\tau^n$ we get
$\sum_nV_n(m)\tau^n = (m-1)(m-2)-vD'(v)/D(v)\big|_{v=\sqrt{c_m\tau}}$, as
claimed.
\end{proof}

\begin{remark}
Theorem \ref{thm:mainb} is the precise sense in which the
generating series is ``the log-derivative of the resultant''.  Again,
$D$ is not an unknown polynomial but rather the resultant of two
explicit elementary polynomials $Q,W$ in three variables, divided by
an explicit monomial.
\end{remark}

\subsection{A recursion for the Verlinde Sums for fixed \texorpdfstring{$m$}{m}}
\label{sec:recursion}

We now show how expressing the generating function as a rational function leads
to recurrence formulas.

\subsubsection{From the generating series to a polynomial recursion}

Theorem~\ref{thm:mainb} already gives $\sum_nV_n(m)\tau^n$
as an explicit function of $D$ and $D'$.  We now simply record its
consequence as a rational function $P(\tau)/Q(\tau)$ of
polynomials, from which the recursion coefficients can be read off
directly.

Fix $m\ge3$. Let $D(u)$ be the spectral determinant polynomial, even of degree
$M=(m-1)(m-2)$, $D(0)\ne0$.  Let $N(u):=D'(u)$, odd of degree $M-1$.
Since $D$ is even and $N$ odd,
\[
\widehat D(w) := D(u)\big|_{u^2=w}, \qquad \widehat N(w) := \frac{N(u)}{u}\bigg|_{u^2=w}
\]
are genuine polynomials in $w$, of degrees $K:=M/2$ and $K-1$
respectively.

\begin{corollary}
\label{cor:polynomial-form}
Recall the notation $c_m:=3(m/2)^2=3m^2/4$. Define
\[
Q(\tau) := \widehat D(c_m\tau), \qquad P(\tau) := M\widehat D(c_m\tau) - c_m\tau\widehat N(c_m\tau),
\]
polynomials of degree $K=(m-1)(m-2)/2$ and at most $K$ in $\tau$. Then, with $V_0(m):=M$,
\[
\sum_{n\ge0} V_n(m)\tau^n = \frac{P(\tau)}{Q(\tau)}.
\]
\end{corollary}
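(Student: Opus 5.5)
The corollary follows by rewriting the right-hand side of \eqref{eq:generarating_series} in Theorem \ref{thm:mainb} in terms of the polynomials $\widehat D$ and $\widehat N$, and then clearing denominators. Set $v=\sqrt{c_m\tau}$, so that $v^2=c_m\tau$. Since $D$ is even, $D(v)=\widehat D(v^2)=\widehat D(c_m\tau)=Q(\tau)$. Since $N=D'$ is odd, $N(v)/v=\widehat N(v^2)$, which gives $vD'(v)=v^2\widehat N(v^2)=c_m\tau\,\widehat N(c_m\tau)$. These substitutions require no choice of the branch of the square root, because only even expressions in $v$ appear.

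Substituting into \eqref{eq:generarating_series}, with $M=(m-1)(m-2)$ and $V_0(m)=M$, we get
\[
\sum_{n\ge0}V_n(m)\tau^n = M-\frac{c_m\tau\,\widehat N(c_m\tau)}{\widehat D(c_m\tau)}
=\frac{M\widehat D(c_m\tau)-c_m\tau\,\widehat N(c_m\tau)}{\widehat D(c_m\tau)}=\frac{P(\tau)}{Q(\tau)}.
\]
This identity holds for small $|\tau|$ and also as formal power series. The latter needs $Q(0)=\widehat D(0)=D(0)\neq0$, which is true because no root $r_j$ of $D$ vanishes: the indices in $A_0(m)$ were removed.

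For the degrees, $D$ is monic and even of degree $M$ in $u$, so $\widehat D$ is monic of degree $K=M/2$ in $w$. Hence $Q(\tau)$ has degree exactly $K$, with leading coefficient $c_m^K\neq0$. The polynomial $\widehat N$ has degree $K-1$, so $c_m\tau\widehat N(c_m\tau)$ has degree $K$, and $P$ has degree at most $K$. In fact the leading terms cancel: $\widehat N$ has leading coefficient $M$, because $D'(u)=Mu^{M-1}+\dots$ gives $N(u)/u=Mu^{M-2}+\dots$. Both $M\widehat D(c_m\tau)$ and $c_m\tau\widehat N(c_m\tau)$ therefore have leading term $Mc_m^K\tau^K$, and $\deg P\le K-1$.

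No step is a real obstacle. The only points needing care are the parity bookkeeping that makes $\widehat D$ and $\widehat N$ genuine polynomials, and checking that $V_0(m)=M$ matches the constant term. At $\tau=0$ the constant term is $P(0)/Q(0)=M\widehat D(0)/\widehat D(0)=M$, as required.
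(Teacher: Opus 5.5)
Your proof is correct and follows the paper's argument: you substitute $v=\sqrt{c_m\tau}$ into Theorem~\ref{thm:mainb}, use that $D$ is even and $N=D'$ is odd to rewrite $vD'(v)/D(v)$ as $c_m\tau\,\widehat N(c_m\tau)/\widehat D(c_m\tau)$, and clear the denominator. Your extra remark that the leading terms cancel, so that in fact $\deg P\le K-1$, is also correct and sharpens the paper's stated bound: writing $\widehat D(w)=\sum_k \hat a_k w^k$, one has $M\widehat D(w)-w\widehat N(w)=\sum_k (M-2k)\hat a_k w^k$, whose coefficient at $k=K=M/2$ vanishes.
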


\begin{proof}
By Theorem~\ref{thm:mainb},
$\sum_nV_n(m)\tau^n = M - vD'(v)/D(v)\big|_{v=\sqrt{c_m\tau}} = M - vN(v)/D(v)\big|_{v=\sqrt{c_m\tau}}$.
Since $N(v)/v$ and $D(v)$ are functions of $v^2$ (as $N$
odd, $D$ even),
$v\cdot N(v)/D(v) = v^2\cdot\widehat N(v^2)/\widehat D(v^2) = c_m\tau\cdot\widehat N(c_m\tau)/\widehat D(c_m\tau) $. This gives
$\sum_nV_n(m)\tau^n = M-c_m\tau\widehat N(c_m\tau)/\widehat D(c_m\tau) = P(\tau)/Q(\tau)$.
\end{proof}

\begin{corollary}
\label{cor:recursion}
Write $Q(\tau)=\sum_{k=0}^Kq_k\tau^k$, $P(\tau)=\sum_{k=0}^{\widetilde K}p_k\tau^k$, where $\widetilde{K}={\rm deg}P\leq K$.
Then
\begin{equation}\label{eq. recursion}
\sum_{k=0}^{\min(n,K)}q_kV_{n-k}(m) = \begin{cases}p_n, & 0\le n\le \widetilde K,\\0, & n>\widetilde K.\end{cases}
\end{equation}
This is a linear recursion of order $K=(m-1)(m-2)/2$ with constant
coefficients, determining $V_n(m)$ for $n>K$ from
$V_1(m),\dots,V_{K}(m)$.
\end{corollary}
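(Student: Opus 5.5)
The statement to prove is Corollary~\ref{cor:recursion}. The plan is to clear denominators in Corollary~\ref{cor:polynomial-form} and compare coefficients.

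\textbf{Step 1: clear denominators.} Corollary~\ref{cor:polynomial-form} gives the identity
\[
\sum_{n\ge0}V_n(m)\tau^n=\frac{P(\tau)}{Q(\tau)},
\]
valid as formal power series. It also holds analytically for small $|\tau|$, since $Q(0)=\widehat D(0)=D(0)\neq0$. Multiplying by $Q(\tau)$ gives
\[
Q(\tau)\sum_{n\ge0}V_n(m)\tau^n=P(\tau)
\]
in $\mathbb{C}[[\tau]]$.

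\textbf{Step 2: compare coefficients.} Expand the left side by the Cauchy product. The coefficient of $\tau^n$ is
\[
\sum_{k=0}^{\min(n,K)}q_kV_{n-k}(m),
\]
because $q_k=0$ for $k>K$. The coefficient of $\tau^n$ on the right is $p_n$ for $n\le\widetilde K$ and $0$ for $n>\widetilde K$. This is exactly \eqref{eq. recursion}.

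\textbf{Step 3: recursion order.} For the recursion claim we need $q_0\neq0$, which we have, and $q_K\neq0$. The latter holds because $\widehat D$ is monic of degree $K$ in $w$, so $q_K=c_m^K\neq0$.

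\textbf{Step 4: determining the sequence.} For $n>\widetilde K$ (in particular for $n>K$), solving for $V_n(m)$ gives
\[
V_n(m)=-q_0^{-1}\sum_{k=1}^{K}q_kV_{n-k}(m).
\]
This is a constant-coefficient recursion of order $K$. It determines all later terms from the $K$ preceding values, namely $V_{n-K}(m),\dots,V_{n-1}(m)$. In particular, starting at $n=K+1$, it uses $V_1(m),\dots,V_K(m)$ together with the known value $V_0(m)=M$.

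\textbf{Main obstacle.} The only subtle point is bookkeeping. Since $V_0(m)=M$ enters the first step at $n=K+1$, the phrase ``from $V_1,\dots,V_K$'' should be read with $V_0=M$ understood as fixed. Otherwise the argument is immediate.
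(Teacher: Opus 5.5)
Your argument is correct and is the paper's own proof: multiply the identity of Corollary~\ref{cor:polynomial-form} by $Q(\tau)$ and equate coefficients of $\tau^n$; your checks that $q_0=D(0)\neq0$ and $q_K=c_m^K\neq0$ are useful additions. One correction to your closing remark: at $n=K+1$ the indices $n-k$ for $0\le k\le K$ run over $K+1,\dots,1$, so $V_0(m)$ does not enter, and for $n>K$ the recursion uses only $V_1(m),\dots,V_K(m)$ as seeds (as your own Step 4 already shows).
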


\begin{proof}
Write $Q(\tau)\sum_nV_n(m)\tau^n=P(\tau)$
 and then equate the coefficients of
$\tau^n$.
\end{proof}

\begin{remark}
  If ${\rm GCD}(P(\tau), Q(\tau))$ is a polynomial of degree $d>0$, the linear recursion \eqref{eq. recursion} can, of course, be further reduced to a linear recursion of order $K-d$ with constant coefficients.
\end{remark}

\subsection{Recursion formula examples}\label{subsec:Recursion_formula_A}

\noindent Let us record the
order $K=(m-1)(m-2)/2$ for the first five levels $m=3,4,5,6,7$. Each
recursion was derived symbolically from $D(w)$ via $\widehat D(x):=D(w)|_{w^2=x}$ and
$\widehat N(x) := \big(D'(w)/w\big)\big|_{w^2=x}$, and then
independently verified by direct 60-digit numerical evaluation of the defining Verlinde sum \eqref{eq:Vn-defn}
for every $n$ tested.

\noindent In every case, writing $Q(\tau)=\sum_{k=0}^K q_k\tau^k$, the recursion is
$$
V_n(m) = c_1 V_{n-1}(m) + c_2 V_{n-2}(m) + \cdots + c_K V_{n-K}(m), \qquad n>K, \qquad c_k := -\frac{q_k}{q_0}.
$$
We refer to the initial values $V_{1}(m)$,...,$V_{K}(m)$ as the seeds of the recursion.

\begin{example}
  For $m=3$  $(K=1)$ we have
$$
V_n(3) = V_{n-1}(3), \qquad n>1.
$$
Seed: $V_1(3) = 2$. This recovers previously obtained information.
\end{example}


\begin{example}
  For $m=5$ \ $(K=6)$ we have
$$
V_n(5) = 45\,V_{n-1} - 810\,V_{n-2} + 7425\,V_{n-3} - 36450\,V_{n-4} + 91125\,V_{n-5} - 91125\,V_{n-6}, \qquad n>6.
$$
Seed: $V_1(5),\dots,V_6(5) = 90,\ 810,\ 8100,\ 85050,\ 911250,\ 9841500$.
\end{example}

\begin{example}
  For $m=6$ \ $(K=10)$ we have
\begin{align*}
V_n(6) = \ &166\,V_{n-1} - 11583\,V_{n-2} + 446148\,V_{n-3} - 10526031\,V_{n-4} + 160022790\,V_{n-5}\\
&- 1599105969\,V_{n-6} + 10433249712\,V_{n-7} - 42702347232\,V_{n-8} + 99179645184\,V_{n-9}\\
&- 99179645184\,V_{n-10}, \qquad n>10.
\end{align*}
Seed: $V_1(6),\dots,V_{10}(6) =$
\begin{align*}
&332,\ 8780,\ 288812,\ 10156940,\ 363507692,\ 13067079500,\ 470242412972,\\
&16927176136460,\ 609364389580652,\ 21936992483887820.
\end{align*}
\end{example}

\begin{example}
  For $m=7$ \ $(K=15)$:
\begin{align*}
V_n(7) = \ &504\,V_{n-1} - 108486\,V_{n-2} + 13215447\,V_{n-3} - 1023942465\,V_{n-4} + 53971394715\,V_{n-5}\\
&- 2018505657735\,V_{n-6} + 54972824448402\,V_{n-7} - 1105751293418835\,V_{n-8}\\
&+ 16497990847533951\,V_{n-9} - 181643903852608890\,V_{n-10} + 1452250517248048266\,V_{n-11}\\
&- 8172324365150558151\,V_{n-12} + 30585530792345653278\,V_{n-13} - 68122318582951682301\,V_{n-14}\\
&+ 68122318582951682301\,V_{n-15}, \qquad n>15.
\end{align*}
Seed: $V_1(7),\dots,V_{15}(7) =$
\begin{align*}
&1008,\ 74088,\ 7279146,\ 760809672,\ 80448621498,\ 8525323959642,\ 903825853821702,\\
&95828299059904488,\ 10160373167700798168,\ 1077275750776882216938,\\
&114220583437370668330464,\  12110495610281825876484330,\\
&1284042709290383910109981746,\ 136143535334203397185360075278,\\
&14434926577153683690009560927616.
\end{align*}
\end{example}
\section{Proof of Theorems 1.3 and 1.4}
\label{sec:cubesection}

\subsection{Proof Theorem 1.3}

We now show that $D(w)$ is essentially the cube of a polynomial, which substantially improves
the length of the recursion of Corollary \ref{cor:recursion}.

For $(j_1,j_2,j_3)\in(\mathbb Z/m\mathbb Z \setminus \{0\})^3$ admissible values of $\ell$ for which $j_1+j_2+j_3=\ell m$ are $\ell\in\{1,2\}$.

\begin{lemma}
\label{lem:orbit-sizes} The symmetric group on three letters
$S_3$ acts on the set of nonzero triples $(j_1,j_2,j_3)\in A(m)\setminus A_0(m)$
by permutation, preserving $\Lambda_{m}$. Every orbit has size $1$, $3$, or
$6$. An orbit of size $1$ requires $j_1=j_2=j_3$, so then
$3j_1\equiv0\pmod m$ with $j_1\not\equiv0$ except possibly only when
$3\mid m$.  In this case there are exactly two such orbits, when
$j_1=m/3$ and $j_1=2m/3$.
\end{lemma}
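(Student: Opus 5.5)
The plan is to verify the three assertions of the lemma in turn, working directly with the set $A(m)\setminus A_0(m)$ of triples $(j_1,j_2,j_3)$ with all $j_\ell\not\equiv 0$ and $j_1+j_2+j_3\equiv 0\pmod m$.

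\emph{The action is well defined and preserves $\Lambda_m$.} A permutation $\sigma\in S_3$ acts by $(j_1,j_2,j_3)\mapsto (j_{\sigma^{-1}(1)},j_{\sigma^{-1}(2)},j_{\sigma^{-1}(3)})$. The congruence $j_1+j_2+j_3\equiv 0$ and the condition that no coordinate vanishes are both symmetric in the coordinates, so $A(m)\setminus A_0(m)$ is stable under this action. The definition \eqref{eq:beta-Lambda} writes $\Lambda_m(j)$ as the average of $\cos\big(2\pi(j_\ell+m/4)/m\big)$ over $\ell=1,2,3$. This average is symmetric in the three coordinates and depends on each $j_\ell$ only modulo $m$. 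Hence $\Lambda_m$ is invariant under the action.

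\emph{Possible orbit sizes.} By the orbit--stabilizer theorem, each orbit size divides $|S_3|=6$, so a priori it lies in $\{1,2,3,6\}$. The stabilizer of a triple is determined by which coordinates coincide:
\begin{itemize}
\item all three coordinates distinct gives trivial stabilizer and orbit size $6$;
\item exactly two equal coordinates gives stabilizer a transposition and orbit size $3$;
\item all three equal gives stabilizer $S_3$ and orbit size $1$.
\end{itemize}
No other stabilizer arises, since any subgroup fixing a triple is generated by the transpositions of equal coordinates. In particular, orbit size $2$ would require stabilizer $A_3$, which fixes a triple only when all three coordinates are equal, and then the full $S_3$ fixes it too. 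So orbit size $2$ never occurs.

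\emph{Orbits of size $1$.} Such an orbit means $j_1=j_2=j_3=:j$. The triple lies in $A(m)$ exactly when $3j\equiv 0\pmod m$, and lies outside $A_0(m)$ exactly when $j\not\equiv 0$.
\begin{itemize}
\item If $3\nmid m$, then $3$ is invertible modulo $m$, so $3j\equiv 0$ forces $j\equiv 0$. Hence there are no orbits of size $1$.
\item If $3\mid m$, the solutions of $3j\equiv 0$ are $j\in\{0,m/3,2m/3\}$. Excluding $j=0$ leaves exactly the two fixed triples with $j=m/3$ and $j=2m/3$.
\end{itemize}

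All steps are elementary, and the only point needing care is excluding orbit size $2$ via the stabilizer argument above. For context, the two fixed triples should give $r=\pm 3\sqrt3/2$, since $3\Lambda_m=\pm4\sin^3(\pi/3)=\pm 3\sqrt3/2$ by \eqref{eq. 3Lambda explicit}. This matches the factor $w^2-27/4$ appearing in Theorem \ref{thm:cube_first}.
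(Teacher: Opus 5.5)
Your proof is correct and follows the same route as the paper: orbit--stabilizer, with the stabilizer read off from which coordinates coincide, followed by solving $3j\equiv 0 \pmod m$. Your explicit checks that the action preserves $A(m)\setminus A_0(m)$ and $\Lambda_m$, and that orbit size $2$ cannot occur, fill in details the paper leaves implicit.
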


\begin{proof}
Each orbit has size equal to $6/|\mathrm{Stab}|$ where $\mathrm{Stab}$ is the
order of the stabilizer of any point in the orbit.
The stabilizer is all of $S_3$ precisely when $j_1=j_2=j_3$ (yielding
the orbit size $1$), a single transposition precisely when two indices coincide
(yielding the orbit size $3$), trivial otherwise (yielding the orbit size $6$). For
$j_1=j_2=j_3=:j$, the constraint $3j\equiv0\pmod m$ has the excluded
solution $j\equiv0$ and, when $3\mid m$, the further solutions are
$j=m/3,2m/3$.
\end{proof}

\begin{lemma}
\label{lem:concavity}
The function $f(x):=\log\sin x$ is strictly concave on $(0,\pi)$.
Consequently, for $\theta_1,\theta_2,\theta_3\in(0,\pi)$ with
$\theta_1+\theta_2+\theta_3=k\pi$ fixed ($k\in\{1,2\}$),
\begin{equation}\label{eq:prod_sines}
\sin\theta_1\sin\theta_2\sin\theta_3 \le \sin^3\Big(\frac{k\pi}{3}\Big),
\end{equation}
with equality if and only if $\theta_1=\theta_2=\theta_3=k\pi/3$. In particular,
the maximum value of the left-hand-side of \eqref{eq:prod_sines} is $3\sqrt3/8$ which
 is attained only at $\theta_1=\theta_2=\theta_3=k\pi/3$, $k\in\{1,2\}$.  Hence, the eigenvalue $3\Lambda=(-1)^k4\prod_{\ell=1}^3 \sin\theta_\ell=\mp3\sqrt3/2$ only comes from the singleton orbits
of Lemma~\ref{lem:orbit-sizes}.
\end{lemma}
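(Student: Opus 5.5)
The strict concavity of $f(x)=\log\sin x$ on $(0,\pi)$ comes from a direct computation. We have $f'(x)=\cot x$ and $f''(x)=-\csc^2 x$, and $-\csc^2 x<0$ on the whole interval. With concavity in hand, Jensen's inequality applied to the three points $\theta_1,\theta_2,\theta_3\in(0,\pi)$ gives
\[
\tfrac13\bigl(f(\theta_1)+f(\theta_2)+f(\theta_3)\bigr)\le f\Bigl(\tfrac{\theta_1+\theta_2+\theta_3}{3}\Bigr)=f\Bigl(\tfrac{k\pi}{3}\Bigr).
\]
The mean $k\pi/3$ lies in $(0,\pi)$ for $k\in\{1,2\}$. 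Exponentiating yields \eqref{eq:prod_sines}. Because the concavity is strict, equality in Jensen forces all three $\theta_\ell$ to equal the mean. Finally, $\sin(\pi/3)=\sin(2\pi/3)=\sqrt3/2$, so for both values of $k$ the maximum is $(\sqrt3/2)^3=3\sqrt3/8$.

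For the consequence about eigenvalues, I would take a triple $(j_1,j_2,j_3)\in A(m)\setminus A_0(m)$ with representatives $j_\ell\in\{1,\dots,m-1\}$. Then $j_1+j_2+j_3=km$ with $k\in\{1,2\}$, as noted just before Lemma~\ref{lem:orbit-sizes}. I would set $\theta_\ell=\pi j_\ell/m\in(0,\pi)$, so that $\theta_1+\theta_2+\theta_3=k\pi$. By \eqref{eq. 3Lambda explicit}, $3\Lambda_m(j)=(-1)^k4\prod_\ell\sin\theta_\ell$, and therefore $|3\Lambda_m(j)|\le 4\cdot 3\sqrt3/8=3\sqrt3/2$. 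The value $\pm3\sqrt3/2$ is attained precisely when equality holds, which forces $\theta_\ell=k\pi/3$. That means $j_1=j_2=j_3=km/3$, so $3\mid m$ and the triple is one of the two singleton orbits of Lemma~\ref{lem:orbit-sizes}. The sign is $-3\sqrt3/2$ for $k=1$ and $+3\sqrt3/2$ for $k=2$, which matches the $\mp$ in the statement.

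I do not expect a real obstacle here. The one point that needs care is bookkeeping. We must use representatives in $\{1,\dots,m-1\}$ so that each $\theta_\ell$ lies in the open interval $(0,\pi)$, where strict concavity holds. We must also check that the sum is exactly $k\pi$ with $k\in\{1,2\}$: a sum of $0$ or $3\pi$ is excluded because every $j_\ell$ is nonzero and at most $m-1$.
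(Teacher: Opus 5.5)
Your proposal is correct and follows the paper's proof essentially verbatim: $f''=-\csc^2 x<0$, then Jensen with strict concavity for the equality case, then exponentiation, and finally translating the equality case $\theta_\ell=k\pi/3$ back to the singleton triples $j_1=j_2=j_3=km/3$. Your bookkeeping is in fact slightly more careful than the paper's: you take representatives in $\{1,\dots,m-1\}$, justify $k\in\{1,2\}$, and track the sign $(-1)^k$, so that $k=1$ gives $-3\sqrt3/2$ and $k=2$ gives $+3\sqrt3/2$, whereas the paper writes $3\Lambda=-4\prod_\ell\sin\theta_\ell$ for both maxima.
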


\begin{proof} Clearly,
$f''(x)=-\csc^2x<0$ on $(0,\pi)$, so $f$ is strictly concave. Jensen's
inequality gives
$$
\tfrac13(f(\theta_1)+f(\theta_2)+f(\theta_3))\le f(\tfrac{\theta_1+\theta_2+\theta_3}{3})=f(k\pi/3),
$$
with equality precisely when $\theta_1=\theta_2=\theta_3$.  Upon exponentiating gives the
claim. At the maximum, we have that $j_1=j_2=j_3=m/3$ (resp.\ $2m/3$) gives $\theta_i=\pi/3$
(resp.\ $2\pi/3$), and $3\Lambda=-4\sin\theta_1\sin\theta_2\sin\theta_3$, and only at these two values
$|3\Lambda|$ attains the strict global maximum over all real
triples with angle sum $k\pi$, $k\in\{1,2\}$.
\end{proof}

\begin{proof}[Proof of Theorem \ref{thm:cube_first}]
By Lemma~\ref{lem:orbit-sizes}, $D(w)=\prod_{j=(j_1,j_2)}(w-r_j)$ is a product over all
ordered nonzero triples $(j_1,j_2,j_3)\in A(m)\setminus A_0(m)$.  When grouping by $S_3$-orbits,
we can write
$$
D(w)=\prod_{\text{orbits }O}(w-3\Lambda_O)^{|O|},
$$
where $\Lambda_O=\Lambda_{m}(j)=r_j/3$ for a representative $j=(j_1,j_2,j_3)$ of the orbit $O$.

By Lemma~\ref{lem:concavity}, the two singleton orbits exist only when
$3\mid m$, and the corresponding rescaled eigenvalues are $r=\pm 3\sqrt{3}/2$. Hence, the singleton orbits contribute the factor $w^2-27/4$.
Every other orbit has size $3$ or $6$,
hence multiplicity divisible by $3$. We then can write the polynomial $E$ as
$$
E(w)=\prod_{\text{orbits }O  \, \text{of size $3$}}(w-3\Lambda_O) \prod_{\text{orbits }O  \, \text{of size $6$}}(w-3\Lambda_O)^{2}.
$$
The degree count follows from $\deg D=(m-1)(m-2)$ taking
into account when the exceptional factor of degree $2$ is present when $m$ is divisible by $3$.

It is left to prove that $E(w)\in\mathbb Q[w]$. We proved that $E^3(w)=D(w)/H_m(w)$, where $H_m(w)\equiv 1$ if $3\nmid m$ and $H_m(w)=w^2-27/4$, if $3\mid m$. Since $D(w)/H_m(w)$ is monic and its monic cube root is unique, every $\sigma\in{\rm Gal}(\overline{\mathbb Q} / \mathbb Q)$ fixes $E$, hence $E(w)\in\mathbb Q[w]$.
\end{proof}

\subsection{Proof of Theorem 1.4}

From Theorem \ref{thm:cube_first} we have that
$$
w\frac{E'(w)}{E(w)}=\sum_{a\in\mathcal E}\frac{w\mu(a)}{w-a}=-\sum_{n\geq 1}\sum_{a\in\mathcal E_m}\mu(a)\left(\frac{w}{a}\right)^n.
$$

The statement follows immediately from Theorem \ref{thm:mainb} and definition of $S_{2n}(m)$ and $V_0(m)$.

\subsection{Further possible reduction of the recursion length}

\subsubsection{Impact on the recursion length}

Since $D$ is even, so is $E$. By writing
$\widehat E(v):=E(w)|_{w^2=v}$, the largest recursion length is
comparable to the degree of $\widehat E$ in $v$.  Specifically,
\[
\text{recursion length} \le \deg\widehat E + [3\mid m] = \lfloor\frac{(m-1)(m-2)}{6} \rfloor+ [3\mid m].
\]
This is a significant reduction when we only studied $D$, which, in case when ${\rm GCD}(P,Q)$ is trivial, gave a recursion
length of $K=(m-1)(m-2)/2$.
The gap between the current bound and the true minimal order goes to the
question of whether or not $\widehat E$ itself has repeated roots.

\begin{lemma}
\label{thm:squarefree-char}
The polynomial
$\widehat E(v)$ is squarefree if and only if $m\in\{3,4,5\}$.
Equivalently, if $m\geq 6$, then $D(w)$ always has a root of multiplicity at least $6$.
\end{lemma}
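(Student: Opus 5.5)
The plan is to read the multiplicities of roots of $E$ directly off the orbit decomposition in the proof of Theorem \ref{thm:cube_first}. There, $E(w)=\prod_{|O|=3}(w-3\Lambda_O)\prod_{|O|=6}(w-3\Lambda_O)^2$, so every size-$6$ orbit contributes a factor of multiplicity at least $2$ to $E$, and hence of multiplicity at least $6$ to $D$. Since $D(0)\neq0$, all roots are nonzero and come in pairs $\pm a$. The root $x=a^2$ of $\widehat E$ therefore has the same multiplicity as the root $a$ of $E$, because the factors $(w-a)(w+a)=w^2-a^2$ pair up. So $\widehat E$ is squarefree if and only if $E$ is, and a repeated root of $E$ is the same thing as a root of $D$ of multiplicity at least $6$. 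This already gives the stated equivalence between the two formulations.

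\emph{Case $m\ge6$.} I will exhibit a size-$6$ orbit, namely the one containing $(1,2,m-3)$. All three entries are nonzero mod $m$, they sum to $m\equiv0$, and they are pairwise distinct because $m-3\ge3$. By Lemma \ref{lem:orbit-sizes} the stabilizer is trivial, so $(w-3\Lambda_O)^2$ divides $E$ with $3\Lambda_O\neq0$. Hence $\widehat E$ has a repeated root and $D$ has a root of multiplicity at least $6$.

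\emph{Case $m\in\{3,4,5\}$.} I first check that no triple of pairwise distinct nonzero residues sums to $0$ mod $m$:
\begin{itemize}
\item for $m=3$ there are only two nonzero residues;
\item for $m=4$ the only candidate is $\{1,2,3\}$, with sum $6\not\equiv0$;
\item for $m=5$ the distinct triples from $\{1,2,3,4\}$ have sums $6,7,8,9$, none divisible by $5$.
\end{itemize}
So only orbits of size $1$ and $3$ occur. It then remains to show that distinct size-$3$ orbits give distinct eigenvalues. This is the only step where accidental coincidences could break the argument, and I would settle it by direct inspection of the small cases.
\begin{itemize}
\item For $m=3$, $E=1$, which is trivially squarefree.
\item For $m=4$ the size-$3$ orbits are those of $(1,1,2)$ and $(3,3,2)$, giving $r=\mp4\sin^2(\pi/4)\sin(\pi/2)=\mp2$. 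Thus $E=w^2-4$ and $\widehat E=x-4$.
\item For $m=5$, Example \ref{ex:m5} gives $D=\bigl((16w^4-100w^2+125)/16\bigr)^3$, so $\widehat E$ is a constant multiple of $16x^2-100x+125$. Its discriminant is $10000-8000=2000\neq0$, so it is squarefree.
\end{itemize}
In these cases $D$ therefore has no root of multiplicity $6$, which completes both directions.
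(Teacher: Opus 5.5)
Your proof is correct and follows the paper's argument. For $m\ge 6$ the size-$6$ orbit of $(1,2,m-3)$ forces a double root of $E$, hence a root of $D$ of multiplicity at least $6$, and for $m\in\{3,4,5\}$ squarefreeness is checked directly from the explicit $E$. Your added justification is a welcome detail the paper leaves implicit: the multiplicity of $a$ in $E$ equals that of $a^2$ in $\widehat E$ (all roots are nonzero and come in $\pm$ pairs), which makes the ``equivalently'' clause precise.
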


\begin{proof} For $m=3$, we have $E\equiv 1$, $E(w)=w^2-4$ for $m=4$ and $E(w)=16w^4-100w^2+125$ for $m=5$. Their corresponding polynomials $\widehat{E}$ are obviously squarefree.

For $m\ge6$, consider the triple $(1,2,m-3)$. Its three coordinates are
pairwise distinct mod $m$ and none of the three coordinates is $\equiv0\pmod m$.
So by the characterization of $A_0(m)$ as
the triples in $A(m)$ with at least one zero coordinate,
this triple lies in $A(m)\setminus A_0(m)$, i.e.\ its eigenvalue
$3\Lambda_{m}(1,2,m-3)$ is nonzero. Since its three coordinates are
pairwise distinct, its $S_3$-orbit has size exactly $6$ by
Lemma~\ref{lem:orbit-sizes}.  As such, the $E$-multiplicity is at least $2$ at
$w=3\Lambda_{m}(1,2,m-3)$. Hence $\widehat E$ is
not squarefree for any $m\ge6$.
\end{proof}

\begin{lemma}
\label{prop:cyclotomic-field}
Let $N:=\mathrm{lcm}(m,4)$. Every root of $D(w)$ lies in
$\mathbb Q(\zeta_N)^+$, the maximal real subfield of $\mathbb Q(\zeta_N)$,
of degree $\varphi(N)/2$ over $\mathbb Q$. Consequently $E(w)$ splits
completely over $\mathbb Q(\zeta_N)^+$, and every irreducible factor of
$E(w)$ has degree dividing $\varphi(N)/2$.
\end{lemma}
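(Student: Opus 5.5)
The plan is to write each root of $D$ explicitly as an element of the cyclotomic field $\mathbb Q(\zeta_N)$ and then check that it is real. By Lemma~\ref{lem:root-param} and Lemma~\ref{lemma W}, every root of $D(w)$ has the form
\[
r_j=\operatorname{Re}(z_{j_1})+\operatorname{Re}(z_{j_2})+\operatorname{Re}(z_{j_3}),\qquad z_{j}=i\zeta_m^{j},
\]
with $(j_1,j_2,j_3)\in A(m)\setminus A_0(m)$. Since $i=\zeta_4$, each $z_j$ is a power of $\zeta_N$, where $N=\mathrm{lcm}(m,4)$. Moreover $\operatorname{Re}(z_j)=\tfrac12(z_j+z_j^{-1})$ is fixed by complex conjugation. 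Hence each $r_j$ lies in $\mathbb Q(\zeta_N)\cap\mathbb R=\mathbb Q(\zeta_N)^+$.

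That this real subfield has degree $\varphi(N)/2$ over $\mathbb Q$ is classical. Complex conjugation is an element of order $2$ in $\mathrm{Gal}(\mathbb Q(\zeta_N)/\mathbb Q)\cong(\mathbb Z/N\mathbb Z)^\times$, and it acts nontrivially because $N\ge 4>2$. Its fixed field is therefore $\mathbb Q(\zeta_N)^+$, of index $2$.

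Next, $E$ splits completely over $\mathbb Q(\zeta_N)^+$. In the proof of Theorem~\ref{thm:cube_first}, $E$ is written as a product of linear factors $(w-3\Lambda_O)$, and each $3\Lambda_O$ is one of the roots $r_j$ of $D$. So every root of $E$ lies in $\mathbb Q(\zeta_N)^+$.

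Finally, take any irreducible factor $g\in\mathbb Q[w]$ of $E$, using that $E\in\mathbb Q[w]$ by Theorem~\ref{thm:cube_first}, and let $\alpha$ be one of its roots. Then $\mathbb Q\subseteq\mathbb Q(\alpha)\subseteq\mathbb Q(\zeta_N)^+$, so the tower law gives
\[
\deg g=[\mathbb Q(\alpha):\mathbb Q]\;\Big|\;[\mathbb Q(\zeta_N)^+:\mathbb Q]=\varphi(N)/2 .
\]

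There is no real obstacle in this argument. The only step needing care is the first one: $z_j$ must be recognised as an $N$-th root of unity, which is true because $i$ is a $4$th root of unity. It is also the step that explains why $N$ is $\mathrm{lcm}(m,4)$ rather than $m$.
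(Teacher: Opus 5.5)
Your proof is correct and follows essentially the same route as the paper. Both arguments write each root $3\Lambda_m(j)$ in terms of $N$-th roots of unity (you use $\operatorname{Re}(z_j)=\tfrac12(z_j+z_j^{-1})$ where the paper uses $\sin(2\pi j/m)=\tfrac{1}{2i}(\zeta_m^j-\zeta_m^{-j})$), note that it is real and hence lies in $\mathbb Q(\zeta_N)^+$, and finish with the tower law; your final step is slightly cleaner, since $\mathbb Q(\alpha)\subseteq\mathbb Q(\zeta_N)^+$ already gives the divisibility without the paper's remarks on normality of the real subfield.
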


\begin{proof}
Each root of $D(w)$ has the form
$3\Lambda(j)=-(\sin\tfrac{2\pi j_1}{m}+\sin\tfrac{2\pi j_2}{m}+\sin\tfrac{2\pi j_3}{m})$.
Since
each term $\sin(2\pi j/m)=\tfrac{1}{2i}(\zeta_m^j-\zeta_m^{-j})\in\mathbb Q(\zeta_m,i)=\mathbb Q(\zeta_{\mathrm{lcm}(m,4)})$,
so does $\lambda_{j}$. Since
$3\Lambda(j)$ is real, it lies in the fixed field of complex
conjugation, $\mathbb Q(\zeta_N)^+$, of degree $\varphi(N)/2$. As  $\mathbb Q(\zeta_N)$ is Galois over  $\mathbb Q$, it is automatically a splitting field for the minimal polynomial of any of its elements. Moreover, the cyclotomic Galois group is abelian, complex conjugation is central, and therefore its fixed field is Galois over  $\mathbb Q$.  The tower law hence yields that every irreducible factor of
$E(w)$ has degree dividing $\varphi(N)/2$.
\end{proof}

\begin{remark}
Every irreducible factor degree divides $\varphi(\mathrm{lcm}(m,4))/2$. The recursion length will shorten further if any of these irreducible factors are repeated.
\end{remark}

\subsection{Examples of shorter recursion formulas}

In what we write below, Recursion A refers to writing $G$ in terms of $D'/D$ with
further reduction of $D$. Recursion B comes from using Theorem \ref{thm:cube_first}. The formulas
below can be compared to those in Section \ref{subsec:Recursion_formula_A}.


\begin{example}
Let $m=5$.
\noindent
Recursion A length was $K=6$; Recursion B has length $L=2$.
The recursion formula using
Theorem \ref{thm:cube_first} is the following. For $n>2$,
\noindent$V_n(5) = 15 V_{n-1}(5) -  45 V_{n-2}(5)$
with $V_{1}(5) = 90$ and $V_{2}(5) = 810$.
\end{example}

\begin{example}
  Let $m=6$.
\noindent
Recursion A length was $K=10$; \quad Recursion B has length $L=3$.
The recursion formula using
Theorem \ref{thm:cube_first} is the following. For $n>3$:
$$
V_n(6) = 49 V_{n-1}(6) - 504 V_{n-2}(6) + 1296 V_{n-3}(6)
$$
with
$$
\textrm{\rm $V_{1}(6) = 332$, $V_{2}(6) = 8780$ and $V_{3}(6) = 288812$.}
$$
\end{example}

\begin{example}
  Let $m=7$.
\noindent
Recursion A length was $K=15$; \quad Recursion B has length $L=4$.
The recursion formula using
Theorem \ref{thm:cube_first} is the following. For $n>4$,
$$
V_{n}(7) = 147 V_{n-1}(7) - 4851 V_{n-2}(7) + 55566V_{n-3}(7) - 194481V_{n-4}(7)
$$
with
$$
\textrm{\rm $V_{1}(7)= 1008$, $V_{2}(7) = 74088$, $V_{3}(7) = 7279146$ and $V_{4}(7) = 760809672$.}
$$
\end{example}

\section*{Appendix}
\label{sec:examples1}

Let us now present all information for various sets of Verlinde sums for fixed $m$.

For $m=5,6,7,9,10,14$ each subsection records the following computations:
(a) the spectral determinant polynomial $D(w)$; (b) the resolvent kernel $\mathcal G(w)=D'(w)/D(w)$, in reduced closed form;
(c) the factorization of $D(w)$, with every root exhibited in closed radical form, and the resulting expression of
$V_n(m)$ as an explicit finite sum of $n$-th powers, meaning the associated Binet formula. For the cases $m=5, 6, 7$, the data
repeats some results from above.

\subsection*{$m=5$}

\subsubsection*{The denominator}
\[
D(w) = \frac{(16w^4-100w^2+125)^3}{4096}.
\]

\subsubsection*{The resolvent kernel}
With $E(w):=16w^4-100w^2+125$, so that $D=E^3/4096$, Theorem~1.3 gives $
\mathcal G(w)=D'(w)/D(w)=3E'(w)/E(w)$:
\[
\mathcal G(w) = \frac{192w^3-600w}{16w^4-100w^2+125}.
\]

\subsubsection*{Factorization and the Verlinde sum}
As $E$ is even in $w$, the substitution $x=w^2$ reduces it to the quadratic $16x^2-100x+125=0$, with roots
\[
x = \frac{25\pm5\sqrt5}{8}.
\]
The four roots of $E(w)$ are therefore
\[
w = \pm\sqrt{\frac{25+5\sqrt5}{8}}, \qquad w = \pm\sqrt{\frac{25-5\sqrt5}{8}},
\]
each of $D$-multiplicity $3$. Setting $y := \tfrac{3}{4}\cdot25/x = 75/(4x)$ at each of the two values of $x$ gives $y=\tfrac{3(5\mp\sqrt5)}{2}$, and for $n\geq 1$ we have
\begin{equation}\label{vn of 5}
V_n(5) \;=\; 6\left[\left(\frac{3(5+\sqrt5)}{2}\right)^{\!n} + \left(\frac{3(5-\sqrt5)}{2}\right)^{\!n}\right]=\left\{
                                                  \begin{array}{ll}
                                                    2\cdot3^{n+1}5^{\frac{n+1}{2}}F_n, & n \text{  odd} \\
                                                    2\cdot3^{n+1}5^{\frac{n}{2}}(F_{n-1} + F_{n+1}), & n \text{  even}
                                                  \end{array}
                                                \right.,
\end{equation}
where $F_n$ denotes the $n$th Fibonacci number. The sum of coefficients is $6 + 6 = 12 = (5-1)(5-2)$.

\subsection*{$m=6$}

\subsubsection*{The denominator}
Since $3\mid6$.  We have that
\[
D(w) = \left(w^2-\frac{27}{4}\right)(w^2-3)^6\left(w^2-\frac34\right)^{\!3}.
\]

\subsubsection*{The resolvent kernel}
Summing logarithmic derivatives with $D$-multiplicities $1,6,3$, yielding
\[
\mathcal G(w) = 2w\left[\frac{1}{w^2-\tfrac{27}{4}} + \frac{6}{w^2-3} + \frac{3}{w^2-\tfrac34}\right] = \frac{320w^5-2496w^3+2988w}{16w^6-168w^4+441w^2-243}.
\]

\subsubsection*{Factorization and the Verlinde sum}
Every root is an elementary square root: $w=\pm\sqrt3$ (multiplicity $6$), $w=\pm\sqrt3/2$ (multiplicity $3$), and the exceptional pair $w=\pm\tfrac{3\sqrt3}{2}$ (multiplicity $1$, the singleton $S_3$-orbit of Lemma~7.1). With $x=w^2\in\{3,\tfrac34,\tfrac{27}{4}\}$ and $y=27/x$,
\begin{equation*}
V_n(6) \;=\; 2\cdot4^n + 12\cdot9^n + 6\cdot36^n, \qquad n\ge1.
\end{equation*}
The sum of coefficients is $2 + 12 + 6  = 20 = (6-1)(6-2)$.

\subsubsection*{The recursion}
For $n>3$
$$
V_n(6) = 49 V_{n-1}(6) - 504 V_{n-2}(6)
+ 1296 V_{n-3}(6)
$$
with the seeds $V_1(6)=332$, $V_{2}(6) =8780$ and $V_{3}(6)=288812$.

\subsection*{$m=7$}

\subsubsection*{The denominator}
\[
D(w) = \frac{(4w^2-7)^6(64w^6-560w^4+1176w^2-343)^3}{2^{30}}.
\]

\subsubsection*{The resolvent kernel}
With $E(w) := \tfrac{1}{1024}(4w^2-7)^2(64w^6-560w^4+1176w^2-343)$, so that $D=E^3$,
\begin{align*}
\mathcal G(w) &= 3\,\frac{E'(w)}{E(w)} = \frac{48w}{4w^2-7} \;+\; \frac{1152
w^5-6720w^3+7056w}{64w^6-560w^4+1176w^2-343}
\\&= \frac{7680w^7-61824w^5+131712w^3-65856w}{256w^8-2688w^6+8624w^4-9604w^2+2401}.
\end{align*}

\subsubsection*{Factorization and the Verlinde sum}
$E(w)$ splits into an elementary quadratic-in-$x$ factor and an irreducible cubic-in-$x$ factor:
\[
E(w) = \tfrac{1}{1024}(4w^2-7)^2\cdot (64w^6-560w^4+1176w^2-343).
\]
The first factor gives the elementary pair $w=\pm\sqrt7/2$, matching the eigenvalue of $D$-multiplicity $6$ recorded in Example~3.4. The second, in $x=w^2$, is the irreducible cubic $64x^3-560x^2+1176x-343=0$; it has three real roots, given in closed trigonometric form by
\begin{equation}
x_k = \frac{35}{12} + \frac{7\sqrt7}{6}\cos\!\left(\frac{1}{3}\arccos\frac{\sqrt7}{14} - \frac{2\pi k}{3}\right), \qquad k=0,1,2,
\end{equation}
matching the three magnitude classes of Example~3.4. Equivalently, in radical form: with $\omega:=e^{2\pi i/3}$ and
\[
C_\pm := \sqrt[3]{\frac{2401}{3456}\big(1\pm3\sqrt3\,i\big)}
\]
and
\begin{equation}\label{xk}
x_k = \frac{35}{12} + \omega^k C_+ + \omega^{-k}C_-, \qquad k=0,1,2,
\end{equation}
taking $C_+,C_-$ to be complex conjugate cube roots of the (complex-conjugate) radicands above; the two formulas agree since $C_\pm = \tfrac{7\sqrt7}{12}e^{\pm i\theta}$ with $\theta=\tfrac13\arccos(\sqrt7/14)$, so $\omega^kC_++\omega^{-k}C_-=2\operatorname{Re}(\omega^kC_+)=\tfrac{7\sqrt7}{6}\cos(\theta-\tfrac{2\pi k}{3})$. The six roots of this factor are $w=\pm\sqrt{x_k}$, $k=0,1,2$, each of $D$-multiplicity $3$.

Setting $y_k := \tfrac34\cdot49/x_k = 147/(4x_k)$, the roots of the reciprocal cubic $y^3-126y^2+2205y-9261=0$, gives
\begin{equation*}
V_n(7) \;=\; 12\cdot21^n \;+\; 6\sum_{k=0}^{2} y_k^{\,n}, \qquad y_k = \frac{147}{4x_k},\quad x_k \text{ as in \eqref{xk}}.
\end{equation*}
The sum of coefficients is $12  + 6\cdot 3  = 30 = (7-1)(7-2)$.

\subsection*{$m=9$}

\subsubsection*{The denominator}
Here again $3\mid9$. Writing
\[
P(w) := 64w^6-288w^4+324w^2-27, \qquad R(w) := 64w^6-432w^4+648w^2-243,
\]
one has, exactly,
\[
D(w) = 2^{-54}\left(w^2-\frac{27}{4}\right)P(w)^3R(w)^6,
\]
a monic polynomial of degree $2+18+36=56=(9-1)(9-2)$.

\subsubsection*{The resolvent kernel}
Summing logarithmic derivatives with $D$-multiplicities $1,3,6$:
\[
\mathcal G(w) = 2w\left[\frac{1}{w^2-\tfrac{27}{4}}\right] + 3\frac{P'(w)}{P(w)} + 6\frac{R'(w)}{R(w)},
\]
equivalently, as a single reduced fraction,
$\mathcal G(w) = f(w)/g(w)$
where
\begin{align*}
f(w) &=
917504w^{13} - 14524416w^{11} + 83109888w^{9} - 216870912w^{7} + 265472640w^{5} - 135768960w^{3} + 18475776w
\end{align*}
and
\begin{align*}
g(w) &= 16384w^{14}-294912w^{12}+1990656w^{10}-6414336w^8+10450944w^6\\&-8258112w^4+2624400w^2-177147.
\end{align*}

\subsubsection*{Factorization and the Verlinde sum}
The exceptional pair is again $w=\pm\tfrac{3\sqrt3}{2}$ ($D$-multiplicity $1$).
The remaining $54$ roots split into two Galois orbits of triples under $j\mapsto4j\pmod9$ — the order-$3$ subgroup of $(\mathbb Z/9\mathbb Z)^\times$ generating the cyclic cubic subfield of $\mathbb Q(\zeta_{36})^+$ relevant here — corresponding exactly
to the factors $P,R$. In $x=w^2$ these are the two cubics
\[
64x^3-288x^2+324x-27=0 \qquad\text{(roots of $P$, $D$-multiplicity $3$ each)},
\]
and
\[
64x^3-432x^2+648x-243=0 \qquad\text{(roots of $R$, $D$-multiplicity $6$ each)}.
\]
Unlike the $m=7$ case, both cubics have Cardano angle equal to a rational multiple of $\pi$, and the relevant root of unity
here is indexed by $m$ itself, thus giving the closed forms
\begin{align}
x_k^P &= \frac32\left(1+\cos\Big(\frac{2\pi}{9}-\frac{2\pi k}{3}\Big)\right), & k=0,1,2,\\
x_k^R &= \frac94 + \frac{3\sqrt3}{2}\cos\Big(\frac{\pi}{18}-\frac{2\pi k}{3}\Big), & k=0,1,2.
\end{align}
The full set of $54$ nonexceptional roots of $D(w)$ is $w=\pm\sqrt{x^P_k}$ (multiplicity $3$) and $w=\pm\sqrt{x^R_k}$ (multiplicity $6$), $k=0,1,2$.

Setting $y^P_k := \tfrac34\cdot81/x^P_k = 243/(4x^P_k)$ and likewise $y^R_k=243/(4x^R_k)$,
\begin{equation}
V_n(9) \;=\; 2\cdot9^n \;+\; 6\sum_{k=0}^{2}\big(y_k^P\big)^{n} \;+\; 12\sum_{k=0}^{2}\big(y_k^R\big)^{n},
\end{equation}
with $x_k^P,x_k^R$ as above. In particular
\[
V_1(9)=6336, \qquad V_2(9)=2873880, \qquad V_3(9)=1832410026.
\]
The sum of coefficients is $2  + 6\cdot 3 + 12 \cdot 3  = 56 = (9-1)(9-2)$.

\medskip
We now proceed with two final examples:  $m=10$ and $m=14$.

\subsection*{$m=10$}

\subsubsection*{The denominator}
Again $3\nmid10$, so $D=E^3$:
$$
D(w) = \frac{\bigl(w^4-5w^2+5\bigr)^6\bigl(16w^4-100w^2+5\bigr)^3\bigl(16w^4-100w^2+125\bigr)^3\bigl(16w^4-20w^2+5\bigr)^6}{2^{48}},
$$
monic of degree $72=(10-1)(10-2)$. Notably, the factor $16w^4-100w^2+125$ is \emph{identical} to
the $m=5$ denominator quartic.  This is expected because of the presence of the terms for $m=5$
in the series for $m=10$.

\subsubsection*{The resolvent kernel}
$$
\resizebox{\textwidth}{!}{$
\mathcal G(w) = \dfrac{294912w^{15} - 4915200w^{13} + 31150080w^{11} - 95232000w^{9} + 147686400w^{7} - 112080000w^{5} + 35649000w^{3} - 2887500w}{4096w^{16} - 76800w^{14} + 560640w^{12} - 2040000w^{10} + 3951200w^8 - 4057500w^6 + 2042625w^4 - 403125w^2 + 15625}.
$}
$$

\subsubsection*{Factorization and the Verlinde sum}
Passing to $x=w^2$, each quartic factor of $D$ becomes a quadratic in $x$, all four solvable
directly:
$$
\widehat E_b(x) = \frac{(x^2-5x+5)^2(16x^2-100x+5)(16x^2-100x+125)(16x^2-20x+5)^2}{65536},
$$
with roots
\begin{align*}
&x=\frac{5\pm\sqrt5}{2}\ (\text{mult.\ }2),\qquad
x=\frac{25\pm11\sqrt5}{8}\ (\text{mult.\ }1),\\
&x=\frac{25\pm5\sqrt5}{8}\ (\text{mult.\ }1,\text{ the embedded }m=5\text{ pair}),\qquad
x=\frac{5\pm\sqrt5}{8}\ (\text{mult.\ }2).
\end{align*}
With $c_{10}=3\cdot10^2/4=75$ and $y:=c_m/x$,
\begin{align*}
y\Bigl(\tfrac{5\pm\sqrt5}{2}\Bigr)&=\tfrac{75\mp15\sqrt5}{2},\qquad
y\Bigl(\tfrac{25\pm11\sqrt5}{8}\Bigr)=750\mp330\sqrt5,\\
y\Bigl(\tfrac{25\pm5\sqrt5}{8}\Bigr)&=30\mp6\sqrt5,\qquad
y\Bigl(\tfrac{5\pm\sqrt5}{8}\Bigr)=150\mp30\sqrt5,
\end{align*}
giving

\begin{align}\label{vn 10}
V_n(10) &= 6\Bigl\{\ 2\Bigl[\Bigl(\tfrac{75-15\sqrt5}{2}\Bigr)^{\!n}+\Bigl(\tfrac{75+15\sqrt5}{2}\Bigr)^{\!n}\Bigr]
+\bigl(750-330\sqrt5\bigr)^n+\bigl(750+330\sqrt5\bigr)^n\\[2pt]
&+\bigl(30-6\sqrt5\bigr)^n+\bigl(30+6\sqrt5\bigr)^n
+2\Bigl[\bigl(150-30\sqrt5\bigr)^n+\bigl(150+30\sqrt5\bigr)^n\Bigr]\Bigr\}
,\quad n\ge1.\nonumber
\end{align}
The sum of coefficients is $6(2+2+1+1+1+1+2+2)=72$, as required. The middle pair,
$(30\mp6\sqrt5)^n$, is exactly $4^n$ times the $m=5$ contribution $y=3(5\mp\sqrt5)/2$ of~(10.1)
--- the embedding rescales by $(c_{10}/c_5)=(10/5)^2=4$, as it must. Explicitly,
$$
V_1(10) = 13860,\qquad V_2(10) = 13985460,\qquad V_3(10) = 19896143400.
$$

\subsubsection*{The recursion}
For $n>8$,
\begin{align*}
V_n(10) &=
1935 V_{n-1}(10)
- 735345V_{n-2}(10)
+ 109552500V_{n-3}(10)
- 8001180000V_{n-4}(10)
\\&+ 309825000000V_{n-5}(10)
- 6386040000000V_{n-6}(10)
\\&+ 65610000000000V_{n-7}(10)
- 262440000000000V_{n-8}(10)
\end{align*}
with  $V_1(10),\ldots,V_{8}(10)$ given by
\seqsplit{13860}, \seqsplit{13985460}, \seqsplit{19896143400}, \seqsplit{29434232085300}, \seqsplit{43760418628432500}, \seqsplit{65103806075857659000}, \seqsplit{96866509540005589702500}, \seqsplit{144127568255155203219232500}.

\subsection*{$m=14$: an embedded cube root}

\noindent Computations with $m=10$ showed a \emph{square-root} example reappearing inside a new
level, $m=14=2\cdot7$ shows the same phenomenon one step up.

\subsubsection*{The denominator}
Since $3\nmid14$, $D=E^3$ with no exceptional factor. We find
$$
\resizebox{\textwidth}{!}{$
D(w) = \dfrac{(4w^2-7)^6\bigl(w^6-7w^4+14w^2-7\bigr)^6\bigl(64w^6-560w^4+952w^2-7\bigr)^3
\bigl(64w^6-560w^4+1176w^2-343\bigr)^3\bigl(64w^6-336w^4+140w^2-7\bigr)^6\bigl(64w^6-112w^4+56w^2-7\bigr)^6}{2^{120}},
$}
$$
a monic polynomial of degree $156=(14-1)(14-2)$.

Two of these six factors are \emph{exactly} the two factors of $D(w)$ for $m=7$ itself
: $4w^2-7$ and $64w^6-560w^4+1176w^2-343$, unchanged, down to the coefficients. The other four factors are genuinely new to
$m=14$.

\subsubsection*{Verlinde sums}
It is worth having $V_n(7)$ itself as a single self-contained formula into one line, so the cubic radicals are visible directly rather than assembled from two
separate equations. With
$$
\omega := e^{2\pi i/3}, \qquad
C_\pm := \sqrt[3]{\dfrac{2401}{3456}\Bigl(1 \pm 3\sqrt3\,i\Bigr)},
$$
we have
$$
V_n(7) \ = \ 12\cdot21^n \ + \ 6\cdot\left(\frac{147}{4}\right)^n\sum_{k=0}^{2}\left(\dfrac{35}{12}+\omega^k C_+ + \omega^{-k}C_-\right) ^{-n}\,,
\qquad n\ge1.
$$
The imaginary parts cancel term by term, since the three $k$-values run through a full conjugate
orbit, thus leaving an integer for every $n$.  This was checked directly at $n=1,2,3$, reproducing
$V_1(7)=1008$, $V_2(7)=74088$, $V_3(7)=7279146$ to 50-digit precision with the imaginary part
vanishing to that same precision.
\medskip

Passing to $x=w^2$, the embedded cubic $64x^3-560x^2+1176x-343$ has exactly the three roots
$x_k = \tfrac{35}{12}+\omega^k C_+ + \omega^{-k}C_-$ appearing inside the formula above.

At level $14$, $c_{14}=3\cdot14^2/4=147$, so $y_k := c_{14}/x_k = 147/x_k$ --- exactly $4$ times
the level-$7$ value $147/(4x_k)$ appearing in the boxed formula above, since $c_{14}/c_7 =
(14/7)^2=4$. The embedded quadratic factor $4x-7$ contributes its single rescaled root the same
way. Combining both embedded pieces, the \emph{entire} embedded contribution collapses to
$ 4^n\,V_n(7)$.
The whole $m=7$ spectrum reappears inside $m=14$, rescaled by exactly $(c_{14}/c_7)^n=4^n$.

The remaining four factors of $E(w)$, each new to $m=14$,  are, in $x=w^2$, with their $E$-multiplicities:
\begin{align*}
x^3-7x^2+14x-7 &\quad (\text{mult. }2), &
64x^3-560x^2+952x-7 &\quad (\text{mult. }1),\\
64x^3-336x^2+140x-7 &\quad (\text{mult. }2), &
64x^3-112x^2+56x-7 &\quad (\text{mult. }2).
\end{align*}
Solving each gives that
$$
\begin{array}{c|c|c}
\text{cubic} & \text{shift} & C_\pm \\ \hline
x^3-7x^2+14x-7 & 7/3 & \sqrt[3]{\tfrac{7}{54}(-1\pm3\sqrt3\,i)}\\
64x^3-560x^2+952x-7 & 35/12 & \sqrt[3]{\tfrac{10969}{3456}\pm\tfrac{3913\sqrt3}{1152}\,i}\\
64x^3-336x^2+140x-7 & 7/4 & \sqrt[3]{\tfrac72\bigl(1\pm\tfrac{\sqrt3}{9}i\bigr)}\\
64x^3-112x^2+56x-7 & 7/12 & \sqrt[3]{\tfrac{7}{3456}(-1\pm3\sqrt3\,i)}
\end{array}
$$
Writing $C_\pm^{(1)},\dots,C_\pm^{(4)}$ for the four pairs above in order and $C_\pm^{(0)}$ for
$m=7$'s own pair of $m=7$ subsection of this Appendix, every root of $D(w)$ is accounted for, and $V_n(14)$ is, in full:
$$
\begin{aligned}
V_n(14) \ = \ & 4^n\!\left(12\cdot21^n + 6\sum_{k=0}^{2}\left[\dfrac{147}{4\left(\tfrac{35}{12}+\omega^kC_+^{(0)}+\omega^{-k}C_-^{(0)}\right)}\right]^{\!n}\right)\\[6pt]
& + 12\sum_{k=0}^{2}\left[\dfrac{147}{\tfrac{7}{3}+\omega^kC_+^{(1)}+\omega^{-k}C_-^{(1)}}\right]^{\!n}
+ 6\sum_{k=0}^{2}\left[\dfrac{147}{\tfrac{35}{12}+\omega^kC_+^{(2)}+\omega^{-k}C_-^{(2)}}\right]^{\!n}\\[6pt]
& + 12\sum_{k=0}^{2}\left[\dfrac{147}{\tfrac{7}{4}+\omega^kC_+^{(3)}+\omega^{-k}C_-^{(3)}}\right]^{\!n}
+ 12\sum_{k=0}^{2}\left[\dfrac{147}{\tfrac{7}{12}+\omega^kC_+^{(4)}+\omega^{-k}C_-^{(4)}}\right]^{\!n}
\end{aligned}
$$
This was verified directly against
100-digit evaluation of the defining sum (5), with the imaginary parts cancelling to that same
precision at every $n$ tested:
\begin{align*}
V_1(14) &= 176904, \quad V_2(14) = 2466167256,\\
V_3(14) &= 47523549747996, \quad V_4(14) = 942433121112340536.
\end{align*}

\subsubsection*{The recursion}
For $n>16$,
\begin{align*}
V_n(14) &=
24990V_{n-1}(14)
- 110391561V_{n-2}(14)
+ 191178833013V_{n-3}(14)
\\&- 173643684876648V_{n-4}(14)
+ 94555439941369680V_{n-5}(14)
- 33178356664361479872V_{n-6}(14)
\\&+ 7828447319371215419904V_{n-7}(14)
- 1274989716757226217517056V_{n-8}(14)
\\&+ 145586242669177488010678272V_{n-9}(14)
- 11741021358725899855634890752V_{n-10}(14)
\\&+ 668052181727643735897809485824V_{n-11}(14)
- 26513463184439223539851287330816V_{n-12}(14)
\\&+ 714802477432883119835588214128640V_{n-13}(14)
\\&- 12426472330422917072242825639231488V_{n-14}(14)
\\&+ 125183542719102641597021504095125504V_{n-15}(14)
\\&- 553443030968664310218410860210028544V_{n-16}(14)
\end{align*}
with $V_1(14),\ldots,V_{16}(14)$ given by
\medskip

\noindent
\seqsplit{176904}, \seqsplit{2466167256}, \seqsplit{47523549747996}, \seqsplit{942433121112340536}, \seqsplit{18750635619256136231244}, \seqsplit{373213726101841157028558972}, \seqsplit{7428848048726501458177459312788}, \seqsplit{147872757071935551797296653766484664}, \seqsplit{2943440256373791257710737826850288856744}, \seqsplit{58589841893309635392262659561483735301126716}, \seqsplit{1166244027474663185873201979012521469472789334232}, \seqsplit{23214350645043422380983191308579346605867317376979580}, \seqsplit{462086890301572161379915294364229256198446851400244101276}, \seqsplit{9197943869263408950920409929236677298908462024870299935144644}, \seqsplit{183087149187906350166061446663228835875049841642953282030214034216}, \seqsplit{3644391037195382365141804194836781078180376922894241912672127300912440}.

If we had used the polynomial $D$ itself for the recursion for $m=14$, the formula would require $78$ terms and the largest coefficient
has $175$ digits.

\vspace{5mm}
\noindent
Jay Jorgenson \\
 Department of Mathematics \\
 The City College of New York \\
 Convent Avenue at 138th Street \\
 New York, NY 10031 U.S.A. \\
 e-mail: jjorgenson@mindspring.com

\vspace{5mm}
\noindent
Anders Karlsson \\
 Section de mathématiques\\
 Université de Genève\\
 Case Postale 64, 1211\\
 Genève 4, Suisse\\
 e-mail: anders.karlsson@unige.ch \\
 and \\
 Matematiska institutionen \\
 Uppsala universitet \\
Box 524, 751 20 \\
 Uppsala, Sweden \\
 e-mail: anders.karlsson@math.uu.se

\vspace{5mm}
\noindent
Lejla Smajlovi\'{c} \\
 Department of Mathematics and Computer Science\\
 University of Sarajevo\\
 Zmaja od Bosne 35, 71 000 Sarajevo\\
 Bosnia and Herzegovina\\
 e-mail: lejlas@pmf.unsa.ba


\begin{thebibliography}{99}

\bibitem[AMW01]{AMW01} Alekseev, A., Meinrenken, E., Woodward, C.: \emph{The
Verlinde formulas as fixed point formulas}, J. Symplectic Geom. \textbf{1}
(2001), no. 1, 1--46. Correction: J. Symplectic Geom. 1 (2002), no. 2,
427--434.

\bibitem[Be96]{Beauville} Beauville, A.: \emph{Conformal blocks, fusion rules
and the Verlinde formula}, in: Proceedings of the Hirzebruch 65 Conference on
Algebraic Geometry (Ramat Gan, 1993), Israel Math.\ Conf.\ Proc., vol.\ 9,
Bar-Ilan Univ., Ramat Gan, 1996, pp.\ 75--96.

\bibitem[BL94]{BL} Beauville, A., and Laszlo, Y.: \emph{Conformal blocks and
generalized theta functions}, Comm.\ Math.\ Phys.\ \textbf{164} (1994),
385--419.

\bibitem[BT93]{BT93} Blau, M. and Thompson, G.: \emph{Derivation of the
{V}erlinde formula from {C}hern-{S}imons theory and the {$G/G$} model},
Nuclear Phys. B, \textbf{408} (1993), 345--390.


\bibitem[Bo91]{Bott} Bott, R.: \emph{On E.\ Verlinde's formula in the context
of stable bundles}, Internat.\ J.\ Modern Phys.\ A \textbf{6} (1991), no.\ 16,
2847--2858.

\bibitem[BoVe09]{BoVe} Boysal, A., Vergne, M.: \emph{Paradan's wall crossing
formula for partition functions and Khovanski--Pukhlikov differential
operator}, Ann.\ Inst.\ Fourier (Grenoble) \textbf{59} (2009), no.\ 5,
1715--1752.

\bibitem[BrVe97]{BrVe} Brion, M., Vergne, M.: \emph{Residue formulae, vector
partition functions and lattice points in rational polytopes}, J.\ Amer.\
Math.\ Soc.\ \textbf{10} (1997), no.\ 4, 797--833.


\bibitem[CHJSV23a]{CHJSV23a} Cadavid, C., Hoyos, P., Jorgenson, J.,
Smajlovi\'c, L., and Velez, J.: \emph{On an approach for evaluating certain
trigonometric character sums using the discrete time heat kernel}, European J.
Combin. \textbf{108} (2023), Paper No. 103635.

\bibitem[Do92]{Dowker92} Dowker, J.\,S.: \emph{On Verlinde's formula for the
dimensions of vector bundles on moduli spaces}, J.\ Phys.\ A: Math.\ Gen.\
\textbf{25} (1992), no.\ 9, 2641--2648.

\bibitem[Fa94]{Faltings} Faltings, G.: \emph{A proof for the Verlinde formula},
J.\ Algebraic Geom.\ \textbf{3} (1994), 347--374.





\bibitem[JKS24]{JKS-cosecant} Jorgenson, J., Karlsson, A., and Smajlovi\'c, L.:
\emph{The resolvent kernel on the discrete circle and twisted cosecant sums},
J.\ Math.\ Anal.\ Appl.\ \textbf{538} (2024), 128454.

\bibitem[KM26]{KM26} Karlsson, A., M\"uller, D.: \emph{A discrete approach to Dirichlet $L$-functions, their special values and zeros}, \url{https://arxiv.org/abs/2512.01779}.

\bibitem[Ku22]{Kumar} Kumar, S.: \emph{Conformal Blocks, Generalized Theta
Functions and the Verlinde Formula}, Cambridge University Press, 2022.

\bibitem[KNR94]{KNR94} Kumar, S., Narasimhan, M. S., Ramanathan, A.:
\emph{Infinite Grassmannians and moduli spaces of G-bundles},
Math. Ann. \textbf{300} (1994), no. 1, 41--75.

\bibitem[LM22]{LM22} Loizides, Y., Meinrenken, E.: \emph{The decomposition
formula for Verlinde sums}, Ann. Inst. Fourier (Grenoble) \textbf{72} (2022),
no. 3, 1207--1248.

\bibitem[LM22b]{LM-QR} Loizides, Y., Meinrenken, E.: \emph{Verlinde sums and
$[Q,R]=0$}, preprint (2022), 50 pp.


\bibitem[Sc08]{Sc08} Schottenloher, M.: \emph{A mathematical introduction to
conformal field theory}, Lecture Notes in Physics, \textbf{759} (2008)
Springer-Verlag, Berlin, pp. xvi+249.

\bibitem[So96]{So96} Sorger, C.: \emph{La formule de Verlinde},
Séminaire Bourbaki, Vol. 1994/95
Astérisque \textbf{237} (1996), Exp. No. 794, 3, 87--114.

\bibitem[Sz91]{Sz91} Szenes, A.: \emph{Verification of Verlinde's formulas for
SU(2)}, Int. Math. Res. Not. 1991 (1991), no. 7, 93--98.

\bibitem[Sz95]{Sz93} Szenes, A.: \emph{The combinatorics of the Verlinde
formula}, in: Vector bundles in algebraic geometry (Durham, 1993), London
Mathematical Society Lecture Note Series, vol. 208, Cambridge University Press,
1995, 241--253.

\bibitem[Sz98]{Sz98} Szenes, A.: \emph{Iterated residues and multiple Bernoulli
polynomials}, Int. Math. Res. Not. \textbf{18} (1998), 937--956.

\bibitem[Sz03]{Sz03} Szenes, A.: \emph{Residue theorem for rational
trigonometric sums and Verlinde's formula}, Duke Math. J. \textbf{118} (2003),
no. 2, 189--227.

\bibitem[SV03]{SV03} Szenes, A., Vergne, M.: \emph{Residue formulae for vector
partitions and Euler--Maclaurin sums}, Adv. Appl. Math. \textbf{30} (2003),
no. 1-2, 295--342.

\bibitem[Th94]{Thaddeus} Thaddeus, M.: \emph{Stable pairs, linear systems and
the Verlinde formula}, Invent.\ Math.\ \textbf{117} (1994), no.\ 2, 317--353.

\bibitem[TUY89]{TUY} Tsuchiya, A., Ueno, K., and Yamada, Y.: \emph{Conformal
field theory on universal family of stable curves with gauge symmetries},
Adv.\ Studies in Pure Math.\ \textbf{19} (1989), 459--565.

\bibitem[Ve03]{Ve03} Vergne, M.: \emph{Residue formulae for Verlinde sums, and for number of integral points in convex rational polytopes}, Mezzetti, Emilia (ed.) et al., European women in mathematics. Proceedings of the tenth general meeting, (EWM’2001), Malta, August 24–30, 2001, River Edge, NJ: World Scientific (ISBN 981-238-190-2/hbk), 225--285, 2003.

\bibitem[Ve88]{Verlinde} Verlinde, E.: \emph{Fusion rules and modular
transformations in 2d conformal field theory}, Nucl.\ Phys.\ B \textbf{300}
(1988), 360--376.




\bibitem[Za96]{Zagier} Zagier, D.: \emph{Elementary aspects of the Verlinde
formula and of the Harder--Narasimhan--Atiyah--Bott formula}, Israel Math.\
Conf.\ Proc.\ \textbf{9} (1996), 445--462.

\end{thebibliography}
\end{document}